\definecolor{NewGGreen}{RGB}{0,99,0}
\newtheorem{prop}{Proposition}
\newtheorem{lemm}{Lemma}
\newtheorem{remark}{Remark}
\title{A geometric analysis of the SIRS epidemiological model on a homogeneous network}
\author{Hildeberto Jard\'on-Kojakhmetov{$^{1}$}, Christian Kuehn{$^{2}$},\\
	Andrea Pugliese{$^{3}$}, Mattia Sensi{$^{3}$}\\[1em]
	$^1$Faculty of Science and Engineering, University of Groningen\\
	$^2$Department of Mathematics, Technical University of Munich \\
	$^3$Universit\`a degli Studi di Trento \\ }
\begin{document}

\maketitle

\begin{abstract}
We study a fast-slow version of an SIRS epidemiological model on homogeneous graphs, obtained through the application of the moment closure method. We use GSPT to study the model, taking into account that the infection period is much shorter than the average duration of immunity. We show that the dynamics occurs through a sequence of fast and slow flows, that can be described through 2-dimensional maps that, under some assumptions, can be approximated as 1-dimensional maps. Using this method, together with numerical bifurcation tools, we show that the model can give rise to periodic solutions, differently from the corresponding model based on homogeneous mixing.
\end{abstract}

\textbf{Keywords:} fast-slow system, epidemic model, non-standard form, epidemics on networks, bifurcation analysis

\section{Introduction}

Mathematical epidemics modelling is, now more than ever, an important and urgent field to explore. A deep understanding of how diseases evolve and spread can give, and has given, us strategies to contain, treat and even prevent them.\\
Over the years, mathematical modellers have made a variety of different assumptions, in order to obtain a tractable trade-off between simplicity, which allows for more in-depth analysis, and realism, which allows to make more precise predictions.\\
In particular, compartment models build on the core idea that the population can, at any time, be portioned into compartments characterized by a specific state with respect to the ongoing epidemic. The first of such models divides the population into Susceptible, Infected and Recovered individuals, from which the SIR acronym is used. A Susceptible can become Infected ($S \rightarrow I$) by making contact with an already infected individuals, and can then either Recover ($I \rightarrow R$) or die, if we assume the disease to be characterized by permanent immunity after a first infection. If we do not make such an assumption, and allow recovered individuals to become susceptible again ($R \rightarrow S$), we obtain a so called SIRS model. Many more models, with different compartments, have been proposed and analysed in the past, see e.g. \cite{Heth, li1995global, dafilis2012influence, giordano2020sidarthe}.\\
Classical compartmental models are based on the homogeneous mixing assumption, i.e. the assumption that any individual in a population may have contacts with any other. Such an assumption, however, is quite unrealistic for many situations in which the observed population is large, and possibly divided in classes, families or generally sub-populations. 
One possible extension is to subdivide the population into groups, assuming homogeneous mixing within each group, but representing inter-group interactions through a contact matrix \cite{mossong2008social}.
Another possible approach is to take into account the network structure of contacts. Often, epidemic dynamics on a network is analysed only through simulations \cite{lopez2016stochastic, smilkov2014beyond, zhang2013stochastic, castellano2010thresholds, volz2008sir, ganesh2005effect}. The method of pair approximations, introduced in epidemiology by Sat\=o et al. \cite{sato1994pathogen} and Keeling et al. \cite{keeling1997correlation}, allows to build a system of differential equations that retains some aspects of the network structure. The ideas and some applications of the methods are presented in detail in the monograph by Kiss et al. \cite{KMS}.
However, not much analytical progress has been made in the study of the resulting systems, possibly because they are generally rather complex.\\
This paper aims at introducing methods from Geometric Singular Perturbation Theory (GSPT) to analyse these systems, building on the ideas introduced in \cite{jardnkojakhmetov2020geometric}. The difference in time-scales between epidemic spread and demographic turnover, which can be observed in many diseases, is the motivation for the use of techniques from GSPT. We refer to \cite{jardnkojakhmetov2020geometric} for a brief introduction of the techniques we use, or to the references therein, and in particular to \cite{jones1995geometric} and \cite{Kuehn}, for a more detailed explanation.
In particular, we will exploit the \emph{entry-exit function} \cite{de2008smoothness, de2016entry} to analyse the behaviour of the system on its critical manifold, which is characterized by a change in stability over a hyperplane.\\
In this work, we assume homogeneity of the network, in order to obtain analytical results, before validating them numerically. Even with such an assumption, the additional complexity brought by the network structure must be treated properly. In fact, in order to completely describe the evolution of a network in time, one needs to have an equation for each possible state of its nodes, one for each possible state of its edges (along which the epidemic spreads), one for each possible state of triples, i.e. three nodes connected by two edges, and so on. This procedure, however, would generate an infinite system of ODEs, which would once again be hardly treatable with analytical tools. In order to overcome this difficulty, one can apply the so-called \emph{moment closure} \cite{kuehn2016moment, KMS}, i.e. approximation formulas which allow us to truncate the dimension of the objects we want to analyse. If we truncate at the node level, we lose the network structure, and we recover a homogeneously mixing system. Instead, we truncate at the edge level, using the pair approximation discussed above, and analyse the system which derives from this choice.\\
To our knowledge, there are relatively few articles in which GSPT has been applied rigorously to epidemics models \cite{rocha2016understanding, jardnkojakhmetov2020geometric, heesterbeek1993saturating, zhang2009singular, brauer2019singular, wang2014dynamical}; however, for most infectious diseases, the presence of different time scales is natural. Moreover, though a SIR model on networks has been studied with moment closure already \cite{bidari2016solvability, KMS}, the SIRS extension has not. Likewise, a thorough bifurcation analysis on compartment models such as the one we analyse in this paper is not present in the literature.\\
The additional feature of the network structure, even in its most simplified version, i.e. homogeneous network, unravels new dynamics for the SIRS system we study. Indeed, there exists a set in the parameter space which allows the system to exhibit a stable limit cycle. To complement the bifurcation analysis, we extend the geometrical argument from \cite{jardnkojakhmetov2020geometric} to the higher dimensional system we study, providing additional justification for the existence of stable limit cycles.\\
It is worth noticing that the model we study is not globally in fast-slow standard form; as in \cite{jardnkojakhmetov2020geometric, kuehn2015multiscale, kosiuk2016geometric}, the fast-slow dynamics are only evident in specific regions of the phase space, in which a local change of coordinates brings the system to a standard two time scales form. In particular, we refer to the very recent monograph \cite{wechselberger2020geometric}, in which the properties of perturbed systems in non-standard form  are thoroughly analysed.\\
The paper is structured as follows: in Section \ref{sec:model}, we recall the derivation of the model, and introduce the moment closure technique. In Section \ref{sec:analysis}, we obtain analytical results on the model, in particular on the fast and slow limit systems and on the application of the entry-exit function. In Section \ref{sec:bifurc}, we perform a bifurcation analysis and numerical exploration of the model. Finally, in Section \ref{sec:conclusions}, we conclude with a summary of the results, and with possible research outlooks.

\section{Formulation of the SIRS model on a network}\label{sec:model}

In this section we describe and propose an SIRS model for epidemics on graphs, building on the model proposed in \cite[Sec. 4.2.2]{KMS}. We are interested in the graph generalization of the model studied in \cite{jardnkojakhmetov2020geometric}, in order to drop the homogeneous-mixing hypothesis, under which we assumed that each individual in the population could have contacts with any other. We then assume loss of immunity to be slower, compared to the other rates (this is the case e.g. for pertussis \cite{dafilis2012influence, lavine2011natural}, and it could potentially be true for the recent SARS-CoV-2 \cite{kissler2020projecting, randolph2020herd}); this assumption brings the model to a non-standard perturbed system of ODEs, which we study with techniques from GSPT. 

\subsection{The model}

The construction of the model is essentially what is presented in detail in \cite[Ch. 4]{KMS}, extended to the SIRS case. For ease of reading, we briefly repeat the whole method.\\
We consider a network of $N$ nodes, with $N$ large, representing the individuals of a population, and we assume this network to be homogeneous, meaning that each node has fixed degree $n \in \mathbb{N}_{\geq 2}$, representing the number of direct neighbours each individual has. We assume the network to be undirected and connected, meaning that, given any two nodes in the network, there is a finite sequence of edges (or an \textit{undirected path}) which starts in the first and ends in the second.\\
Each node can be in three states, namely $S$ (susceptible), $I$ (infected) or $R$ (recovered). We will indicate the number of each state at time $t$ with $[\cdot](t)$; we stress the distinction between the notation $X$, indicating a state, and $[X]$, indicating the number of individuals in the state $X$. We indicate the number of edges connecting a node in state $X$ to one in state $Y$ at time $t$ with $[XY](t)$ for all $t\geq 0$. We distinguish between an edge $XY$, counted starting from a node in state $X$, and the same edge counted starting from the other node in state $Y$, for a reason of conserved quantities, namely (\ref{eqn:const1}), (\ref{eqn:const2}) and (\ref{eqn:const3}) to be defined below. For example, we count the number of edges $SI$ by ``visiting'' each node in state $S$, and counting all its neighbours in state $I$, then summing over all the nodes in state $S$; this implies that, at all times, by definition, $[SI]=[IS]$. The edges connecting a node with another in the same state, such as $SS$, hence, will always be counted twice.\\
Infection can only spread if a node in state $S$ is connected to a node in state $I$ through an edge $SI$; we denote the infection rate with $\beta\geq 0$. Nodes in state $I$ recover, independently from their neighbours, at a rate $\gamma>0$; and nodes in state $R$ lose their immunity, again independently from their neighbours, at a slow rate $\epsilon$, with $0<\epsilon\ll \beta,\gamma$. Based upon these modelling assumptions, it is then straightforward to prove using the master equation of the epidemic model, that one obtains the following system of ODEs:
\begin{align}
\begin{split}
[S]'={}&-\beta [SI]+ \epsilon [R],\\
[I]'={}&\beta [SI]- \gamma [I],\\
[R]'={}&\gamma [I]- \epsilon [R].\\
\end{split}
\label{eqn:nodes1}
\end{align}
From our assumptions, the sum of $[S]+[I]+[R]\equiv N$ is conserved at all times; we normalize by dividing both nodes and edges by $N$, and we do not rename the new variables, which now indicate the density of nodes, and a rescaled fraction of edges, in each state. Now $[S]+[I]+[R]\equiv 1$, so we can reduce the dimension of system (\ref{eqn:nodes1}) by removing $[R]$, obtaining the system
\begin{align}
\begin{split}
[S]'={}&-\beta [SI]+ \epsilon  (1-[S]-[I]),\\
[I]'={}&\beta [SI]- \gamma [I].\\
\end{split}
\label{eqn:nodes2}
\end{align}
In order to fully describe the dynamics of the system, we need an ODE for $[SI]$ as well.
To understand how the number of edges $[SI]$ evolve in time, we need to consider the role of triples, as exemplified in Figure \ref{fig:triple}. A triple is a path of length $2$ through a central node in state $Y$, connected to two nodes in state $X$ and $Z$, respectively; we indicate such a triple with $XYZ$. The positions of $X$ and $Z$ are interchangeable, and the most important node is the central one, as we will explain shortly.
\begin{figure}[H]\centering
	\begin{tikzpicture}
	\node[draw,circle,thick,minimum size=.75cm] (s1) at (0,-1) {$S$};
	\node[draw,circle,thick,minimum size=.75cm] (i1) at (1,0) {$S$};
	\node[draw,circle,thick,minimum size=.75cm] (r1) at (2,-1) {$I$};
	\node at (0.3,-0.3) {$SS$};
	\node at (1.7,-0.3) {$SI$};
	\draw[thick] (s1)--(i1);
	\draw[thick] (i1)--(r1);
	\draw [-{Latex[length=2.mm, width=1.5mm]},thick] (2.5,-0.25)--(3.5,-0.25)  node[above, midway]{$\beta$};	
	\node[draw,circle,thick,minimum size=.75cm] (s2) at (4,-1) {$S$};
	\node[draw,circle,thick,minimum size=.75cm] (i2) at (5,0) {$I$};
	\node[draw,circle,thick,minimum size=.75cm] (r2) at (6,-1) {$I$};
	\node at (4.3,-0.3) {$SI$};
	\node at (5.7,-0.3) {$II$};
	\draw[thick] (s2)--(i2);
	\draw[thick] (i2)--(r2);
	\end{tikzpicture}
	\caption{Example of the role of triples. The rightmost edge (of the triple on the left) turns from $SI$ to $II$ because the infection spreads to the central node; the leftmost edge turns from $SS$ to $SI$ because it belongs to a triple $SSI$.}
	\label{fig:triple}
\end{figure}
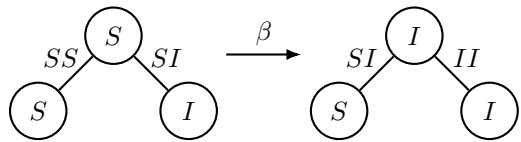	
The only change of the system which depends on the presence of a specific edge is the contagion which brings $SI \rightarrow II$. Direct neighbours of a node in the state $S$ which get infected, i.e. the node $X$ in a triple $XSI$, see their edge $XS$ change to $XI$ due to their belonging to the triple. The two other possible changes in the system, namely the recovery (a node in state $I$ becoming $R$, which happens at a rate $\gamma$) and the loss of immunity (a node in state $R$ becoming $S$, which happens at a rate $\epsilon$) only happen at a node level, so the only nodes which see this change are the direct neighbours of the node changing state, and we do not need to consider their belonging to a triple.\\
\begin{figure}[H]\centering
	\begin{tikzpicture}
	\node at (0,0){
		\includegraphics[scale=0.35]{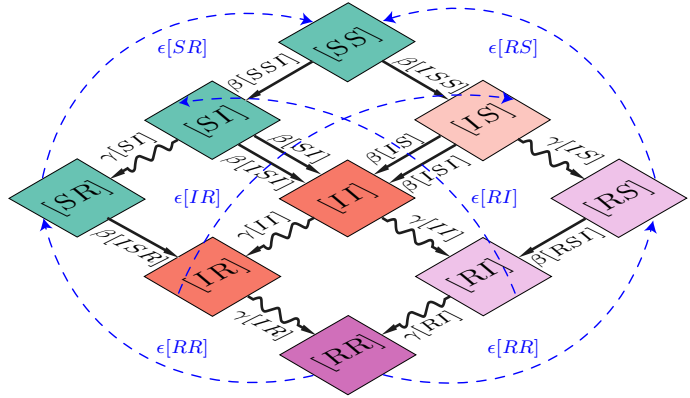}
	};
	\begin{scope}[every node/.append style={
	    yslant=0.5,xslant=-1},yslant=0.5,xslant=-1
	  ]
	\node at (2.05,2.05) {$[SS]$};	  
	\node at (0,0) {$[II]$};
	\node at (0.15,1.9) {$[SI]$};
	\node at (1.95,0.15) {$[IS]$};
	\node at (-1.8,1.8) {$[SR]$};	
	\node at (1.8,-1.8) {$[RS]$};	
	\node at (-0.15,-1.9) {$[RI]$};
	\node at (-1.9,-0.15) {$[IR]$};
	\node at (-2.05,-2.05) {$[RR]$};	
	\node at (1.1,2.25) {\scriptsize $\beta[SSI]$};
	\node at (-0.8,2.15) {\scriptsize $\gamma[SI]$};	
	\node at (-0.95,0.2) {\scriptsize $\gamma[II]$};	
	\node at (0.95,0.35) {\scriptsize $\beta[IS]$};	
	\node at (0.9,-0.25) {\scriptsize $\beta[ISI]$};	
	\node at (0.8,-2.1) {\scriptsize $\beta[RSI]$};
	\node at (-1.15,-2.25) {\scriptsize $\gamma[RI]$};
	\end{scope}
	\node at (-2,0) {\scriptsize \color{blue} $\epsilon [IR]$};
	\node at (1.95,0) {\scriptsize \color{blue} $\epsilon [RI]$};
	\node at (-2.2,-2) {\scriptsize \color{blue} $\epsilon [RR]$};
	\node at (2.2,-2) {\scriptsize \color{blue} $\epsilon [RR]$};
	\node at (2.2,2) {\scriptsize \color{blue} $\epsilon [RS]$};
	\node at (-2.2,2) {\scriptsize \color{blue} $\epsilon [SR]$};
	\begin{scope}[every node/.append style={
	    yslant=-0.5,xslant=1},yslant=-0.5,xslant=1
	  ]
	\node at (-1.1,2.2) {\scriptsize $\beta[ISS]$};	  
	\node at (0.85,2.15) {\scriptsize $\gamma[IS]$};	
	\node at (-1,0.35) {\scriptsize $\beta[SI]$};
	\node at (-0.95,-0.25) {\scriptsize $\beta[ISI]$};	
	\node at (-0.8,-2.1) {\scriptsize $\beta[ISR]$};	
	\node at (1.1,-2.25) {\scriptsize $\gamma[IR]$};	
	\node at (0.95,0.2) {\scriptsize $\gamma[II]$};
	\end{scope}
\end{tikzpicture}
	\caption{Complete description of the edges dynamics considering edges and triples. Straight lines: infections; wobbly lines: recovery; dashed lines: loss of immunity. The base diagram is the same which appears in \cite{KMS}, to visually describe their SIR model; the new, slow dynamics in our model are the dashed blue arrows, symbolizing loss of immunity.}	
	\label{fig:flux}
\end{figure}
For clarity, we fix a lexicographic order $S \prec I \prec R$ for nodes and edges, and write the explicit equations for the edges which follow this order only. If we take into account all the triples with a central node in state $S$ and at least one node $I$, which could infect the central one (as described in Figure \ref{fig:flux}), we obtain the following system of ODEs, which describes the evolution in time of nodes and edges:
\begin{align}
\begin{split}
[S]'={}&-\beta [SI]+ \epsilon  (1-[S]-[I]),\\
[I]'={}&\beta [SI]- \gamma [I],\\
[SS]'={}&2\epsilon [SR]-2\beta[SSI],\\
[SI]'={}&-(\gamma+\beta)[SI]+\epsilon  [IR]+\beta[SSI]-\beta[ISI],\\
[SR]'={}&\gamma [SI] -\epsilon  [SR] +\epsilon [RR]- \beta [ISR],\\
[II]'={}&2\beta[SI]-2\gamma [II]+2\beta[ISI],\\
[IR]'={}&\gamma [II]-(\gamma +\epsilon ) [IR]+\beta[ISR],\\
[RR]'={}& 2\gamma [IR]-2\epsilon  [RR].
\end{split}
\label{eqn:sist1}
\end{align}
Notice the 2 which multiplies the right hand sides of edges connecting nodes in the same state: as we mentioned above, they are always counted twice, whether they are created or lost. To fully describe the system, we would then need to have ODEs for triples, quadruples, etc. Instead, we proceed as in \cite{KMS}, and apply \textit{moment closures}.

\subsection{Moment closures}
Moment closure methods are approximation methods used in many contexts, in order to reduce large (or infinite) dimensional systems of equations to a smaller finite dimension \cite{kuehn2016moment}. Proceeding as in \cite[Sec. 4.2]{KMS}, one can approximate the edges as functions of the nodes, or triples as functions of nodes and edges. If we choose the first option, assuming independence between the state of nodes, we can approximate all edges as follows:
\begin{equation}
[XY] \approx n [X][Y].
\label{eqn:clos1}
\end{equation}
This implies that we lose the network structure and, up to rescaling the infection parameter by $\tilde{\beta}=n\beta$, we recover the SIRS system already studied in \cite{jardnkojakhmetov2020geometric}.
\begin{lemm}\label{lemm:l} Consider \eqref{eqn:nodes2}. Applying approximation \eqref{eqn:clos1} and rescaling $\tilde{\beta}=n\beta$, one recovers the \emph{SIRS} system studied in \cite{jardnkojakhmetov2020geometric}, which is characterized by an asymptotic stability of the endemic equilibrium for orbits starting in the set $\left\{ (S,I,R)\in\mathbb R^3_{\geq0}\, | \, \, S+I+R\leq 1, I>0 \right\}$.
\end{lemm}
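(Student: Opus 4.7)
The plan is to carry out the substitution explicitly and then invoke the stability result from the referenced paper directly, since the lemma is essentially a verification that the moment-closed system agrees with a previously studied one.

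First, I would substitute the approximation $[SI]\approx n[S][I]$ from \eqref{eqn:clos1} into system \eqref{eqn:nodes2}. This gives
\begin{align*}
[S]' &= -\beta n [S][I] + \epsilon(1 - [S] - [I]),\\
[I]' &= \beta n [S][I] - \gamma [I].
\end{align*}
Renaming $\tilde{\beta} = n\beta$ then yields precisely the planar homogeneous-mixing SIRS system analysed in \cite{jardnkojakhmetov2020geometric}, with $[R] = 1 - [S] - [I]$ recovered from the conservation law. This completes the first (algebraic) part of the lemma.

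For the second part, since the system obtained is literally the one studied in \cite{jardnkojakhmetov2020geometric}, the asymptotic stability of the endemic equilibrium for initial data in the set $\{(S,I,R)\in \bbR^3_{\geq 0}\,:\, S+I+R\leq 1,\ I>0\}$ is inherited verbatim from that reference. I would simply cite the corresponding theorem there (where stability is established via e.g.\ local linearization together with a Dulac or Lyapunov argument ruling out periodic orbits in the invariant triangle), and note that the invariance of the simplex $\{S,I,R\geq 0, \ S+I+R\leq 1\}$ under the closed system is immediate from inspection of the vector field on the boundary.

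There is no serious obstacle here: the substitution is mechanical, the rescaling is a one-line renaming of a parameter, and the dynamical content of the statement is entirely imported from \cite{jardnkojakhmetov2020geometric}. The only mild subtlety worth remarking on is the consistency of the closure on the boundary $\{I=0\}$ and the disease-free axis, but since $[SI]\approx n[S][I]$ vanishes there just as $[SI]$ does, the closed system preserves the same invariant sets as the original, and so the quoted stability result applies on the stated domain without modification.
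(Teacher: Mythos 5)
Your proposal is correct and matches what the paper intends: the lemma is stated without an explicit proof precisely because the content is the mechanical substitution $[SI]\approx n[S][I]$ into \eqref{eqn:nodes2}, the renaming $\tilde{\beta}=n\beta$, and the importation of the stability result from \cite{jardnkojakhmetov2020geometric}. Your additional remark about the closure vanishing on $\{I=0\}$ and the invariance of the simplex is a harmless (and slightly more careful) elaboration of the same argument.
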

Instead, in this work we choose to apply the second order approximation, and hence we approximate each triple with the formula given in equation (4.6) of \cite{KMS}, namely
\begin{equation}
[XYZ]\approx \frac{n-1}{n} \frac{[XY][YZ]}{[Y]}.
\label{eqn:clos2}
\end{equation}
This approximation is based on the conditional independence between the states of neighbors of a node, using a counting argument, which for clarity we recall from \cite{KMS}. The total number of edges starting from a node in state $Y$ is $n[Y]$, while the total number of edges in state $XY$ is $[XY]$; this means that a fraction $[XY]/(n[Y])$ of edges starting from a node in state $Y$ reach a node in state $X$. With the same procedure, we obtain a fraction $[YZ]/(n[Y])$ of edges which connect a node in state $Y$, from which we start, with one in state $Z$. Hence, selecting a node in state $Y$ and two of his direct neighbours $u$ and $v$, and using the conditional independence of $u$ and $v$, the probability of them forming a triple $XYZ$ is $[XY][YZ]/(n^2[Y]^2)$. Combinatorics tell us there are $n(n-1)$ ways of picking $u$ and $v$, and $[Y]$ nodes in state $Y$; multiplying $n(n-1)\cdot[Y]\cdot[XY][YZ]/(n^2[Y]^2)$, we obtain formula (\ref{eqn:clos2}).

\section{Analysis of the model}\label{sec:analysis}

In this section we present the pair approximation SIRS model, and give our main analytical results. First, we are going to reduce the dimension of the system, exploiting three conserved quantities. Second, we are going to introduce a formulation for the basic reproduction number for the system, and we describe the behaviour of the fast limit system. Third, we are going to derive the equilibria of the system in the biologically relevant region, and we show that the slow manifold of our perturbed system is exponentially close to the critical manifold. Last, we are going to rescale the system in an $\mathcal{O}(\epsilon)$-neighbourhood of the critical manifold, with a scaling similar to the one proposed in \cite{jardnkojakhmetov2020geometric}, and we apply the entry-exit procedure.\\
Throughout the analysis, we notice that the parabola $[SS]=n[S]^2$, i.e. approximation (\ref{eqn:clos1}) applied to the edges in state $[SS]$, on the critical manifold is of particular importance for the dynamics.

\subsection{Fast-slow model}

In this section, we derive the system we will study for the remainder of the article, applying moment closure to  \eqref{eqn:sist1} and reducing its dimension.

Applying approximation \eqref{eqn:clos2} to every triple in system \eqref{eqn:sist1}, we obtain the following singularly perturbed autonomous system in non-standard form:
\begin{align}
\begin{split}
[S]'={}&-\beta [SI]+ \epsilon   (1-[S]-[I]),\\
[I]'={}&\beta [SI]- \gamma [I],\\
[SS]'={}&2\epsilon  [SR]-2\beta\frac{n-1}{n}\frac{[SS][SI]}{[S]},\\
[SI]'={}&-(\gamma+\beta)[SI]+\epsilon   [IR]+\beta\frac{n-1}{n}[SI]\bigg(\frac{[SS]}{[S]}-\frac{[SI]}{[S]}\bigg),\\
[SR]'={}&\gamma [SI] -\epsilon   [SR] +\epsilon  [RR]- \beta \frac{n-1}{n}\frac{[SI][SR]}{[S]},\\
[II]'={}&2\beta[SI]-2\gamma [II]+2\beta\frac{n-1}{n} \frac{[SI]^2}{[S]},\\
[IR]'={}&\gamma [II]-(\gamma +\epsilon  ) [IR]+\beta \frac{n-1}{n}\frac{[SI][SR]}{[S]},\\
[RR]'={}& 2\gamma [IR]-2\epsilon  [RR],
\end{split}
\label{eqn:nonstand}
\end{align}
in which, as from our assumptions, the processes of infection and recovery are fast, and the process of loss of immunity is slow. By construction, the sum of all the edges starting from a node in the state $[S]$ (or $[I]$ or $[R]$, respectively) is equal to
\begin{subequations}
\begin{align}
[SS]+[SI]+[SR]&=n[S],\label{eqn:const1}\\
[SI]+[II]+[IR]&=n[I],\label{eqn:const2}\\
[SR]+[IR]+[RR]&=n[R],\label{eqn:const3}
\end{align}\label{eqn:const}%
\end{subequations}
which allows us to remove the equation governing $[SR]$ (and $[IR]$ and $[RR]$, respectively). This can be checked by carefully computing the difference of the derivatives of the right hand side(s) and the left hand side(s) of (\ref{eqn:const}). By doing so, we reduce the dimension of the system, obtaining
\begin{subequations}
\begin{align}
[S]'={}&-\beta [SI]+ \epsilon  (1-[S]-[I]),\label{eqn:sistS}\\
[I]'={}&\beta [SI]- \gamma [I],\label{eqn:sistI}\\
[SS]'={}&2\epsilon (n[S]-[SS]-[SI])-2\beta\frac{n-1}{n}\frac{[SS][SI]}{[S]},\label{eqn:sistSS}\\
[SI]'={}&-(\gamma+\beta)[SI]+\epsilon  (n[I]-[SI]-[II])+\beta\frac{n-1}{n}[SI]\bigg(\frac{[SS]}{[S]}-\frac{[SI]}{[S]}\bigg),\label{eqn:sistSI}\\
[II]'={}&2\beta[SI]-2\gamma [II]+2\beta\frac{n-1}{n} \frac{[SI]^2}{[S]}.\label{eqn:sistII}
\end{align}\label{eqn:sist2}%
\end{subequations}
The \textit{basic reproduction number} $R_0$ can be obtained \cite[p. 140]{KMS} for the limit as $\epsilon \rightarrow 0$ of system (\ref{eqn:sist2}) as
\begin{equation}
R_0 =\frac{\beta(n-2)}{\gamma}.
\label{eqn:basicrep}
\end{equation}
We notice that, for (\ref{eqn:basicrep}) to be well-defined and dependent on the parameters of the system, we need $n>2$. The equality $n=2$ describes the very special case of a ring network, i.e., a connected network in which all nodes have exactly two neighbours. In the remainder of the paper we assume $R_0>1$ and $n>2$.
\begin{remark}\label{othererr}
	We notice that the threshold $R_0 \lessgtr 1$ in (\ref{eqn:basicrep}) is equivalent to
\begin{equation}
	R_1:=\frac{\beta(n-1)}{\beta+\gamma}\lessgtr 1 \iff 	R_2:=\frac{\beta n}{2\beta+\gamma}\lessgtr 1,
\label{eqn:otherbas}
\end{equation}
	since they all correspond to $\beta(n-2)\lessgtr \gamma$. A formula corresponding to $R_1$ is given in \cite{KMS}, shortly after the definition of $R_0$.
\end{remark}
 We notice that $R_1$ has a much more intuitive biological interpretation than $R_0$. Consider a network with all the nodes in susceptible state $S$, except one in state $I$. Consider one of the $n$ edges in state $IS$: this could either transition to $RS$, at a rate $\gamma$, and the epidemics would die out immediately, or spread the infection to the node in state $S$, at a rate $\beta$, and become an edge $II$. If the latter happens, with probability $\beta/(\beta+\gamma)$, $(n-1)$ new edges move to state $SI$; hence, $R_1$ can be interpreted in the classical meaning of ``the number of edges infections caused by one infected edge in an otherwise susceptible population''. Recall that the disease spreads only through edges $SI$ (or $IS$, equivalently), so their number should be the quantity we measure in order to quantify the contagiousness of the disease; an edge $II$ can not be used to spread the disease.\\
Now we compute the basic reproduction number $R_1$ for system (\ref{eqn:sist2}) and $\epsilon>0$ sufficiently small.

\begin{prop}\label{arrnod} The basic reproduction number $R_1$ for system (\ref{eqn:sist2}) is given by
\begin{equation}\label{eqn:truebasicrep}
R_1=\frac{\beta(n-1)(\gamma+\epsilon)}{\gamma(\gamma+\beta+\epsilon)}.
\end{equation}
\end{prop}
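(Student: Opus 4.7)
The plan is to compute $R_1$ via the next-generation matrix (NGM) method applied to the linearisation of \eqref{eqn:sist2} at the disease-free equilibrium (DFE) $[S]=1$, $[SS]=n$, $[I]=[SI]=[II]=0$. Since the $\epsilon$-dependence of $R_1$ enters precisely through the term $\epsilon[IR]$ in \eqref{eqn:sistSI}, I would first restore $[IR]$ as an explicit variable via \eqref{eqn:const2}, so that the infected subsystem is expressed in terms of the three infected edge-types $[SI]$, $[II]$, $[IR]$. Linearising at the DFE (using $[SS]/[S]\to n$ and dropping products of infected variables) gives
\begin{align*}
[SI]'&=(\beta(n-2)-\gamma)[SI]+\epsilon[IR],\\
[II]'&=2\beta[SI]-2\gamma[II],\\
[IR]'&=\gamma[II]-(\gamma+\epsilon)[IR].
\end{align*}

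Next I would split the matrix $M$ of this linear system as $M=F-V$, attributing to $F$ only the genuine transmission term — the creation of new $SI$-edges from $SSI$-triples, which at the DFE contributes $\beta(n-1)[SI]$ to $[SI]'$ — and to $V$ every other transition (recovery, loss of immunity, and the internal $SI\to II\to IR$ chain):
\begin{equation*}
F=\begin{pmatrix}\beta(n-1) & 0 & 0\\ 0 & 0 & 0\\ 0 & 0 & 0\end{pmatrix},\qquad
V=\begin{pmatrix}\gamma+\beta & 0 & -\epsilon\\ -2\beta & 2\gamma & 0\\ 0 & -\gamma & \gamma+\epsilon\end{pmatrix}.
\end{equation*}
Since $V$ is a non-singular M-matrix, $R_1=\rho(FV^{-1})$; as $F$ has only one nonzero row, the unique possibly nonzero eigenvalue of $FV^{-1}$ is $\beta(n-1)(V^{-1})_{11}$. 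A direct cofactor computation yields $\det V=2\gamma^{2}(\gamma+\beta+\epsilon)$ and $(V^{-1})_{11}=2\gamma(\gamma+\epsilon)/\det V=(\gamma+\epsilon)/[\gamma(\gamma+\beta+\epsilon)]$, which delivers the stated formula.

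An equivalent, more illuminating argument — matching the biological reading of $R_1$ foreshadowed after Remark~\ref{othererr} — interprets $(V^{-1})_{11}$ as the expected total time a chain started at one $SI$-edge spends in the $SI$ state. The only mechanism producing a return to $SI$ is the cycle $SI\to II\to IR\to IS$: the $SI$ becomes $II$ with probability $\beta/(\beta+\gamma)$ before the $I$ recovers, the $II$ deterministically becomes $IR$, and the $IR$ recycles to $IS$ rather than decaying to $RR$ with probability $\epsilon/(\gamma+\epsilon)$. This leads to the renewal equation
\begin{equation*}
T=\frac{1}{\gamma+\beta}+\frac{\beta}{\gamma+\beta}\cdot\frac{\epsilon}{\gamma+\epsilon}\,T,
\end{equation*}
whose unique solution is $T=(\gamma+\epsilon)/[\gamma(\gamma+\beta+\epsilon)]$; multiplying by the per-$SI$ rate $\beta(n-1)$ of triple-mediated $SI$-creation recovers $R_1$.

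The main delicate point is the $F$-$V$ splitting: it is not uniquely determined, and alternative admissible choices produce numerically distinct but threshold-equivalent expressions (consistent with Remark~\ref{othererr}). The splitting above is singled out by insisting that the definition remain faithful to the interpretation of $R_1$ as ``number of $SI$-edges caused by one $SI$-edge'' — only the triple-mediated transmission is treated as a genuine new infection of the $SI$-class, whereas both $SI\to II$ and the loss-of-immunity transition $IR\to IS$ are internal transitions among infected edge-types and therefore belong in $V$.
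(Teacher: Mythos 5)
Your proposal is correct and follows essentially the same route as the paper: a next-generation-matrix computation at the disease-free equilibrium with the infected edge-classes $[SI]$, $[II]$, $[IR]$, the same transmission/transition splitting isolating $\beta(n-1)[SI]$ as the only new-infection term, and the same resulting $(V^{-1})_{11}$ (your use of $[II]$ in place of the paper's $[II]/2$ is a diagonal similarity that changes nothing). Your closing renewal-equation interpretation likewise mirrors the remark the paper gives immediately after the proposition.
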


\begin{proof}
We use the method first introduced in \cite{diekmann1990definition}, and then generalized in \cite{van2002reproduction} (see also \cite{diekmann2010construction}). We linearize system (\ref{eqn:nonstand}) at the disease free equilibrium 
	$$
	([S],[I],[SS],[SI],[SR],[II],[IR],[RR])=(1,0,n,0,0,0,0,0)
	$$
focusing on the infected compartments. In this case we choose as  variables describing the infected compartments $[SI]$, $[II]/2$ and $[IR]$ obtaining
	$$
	\begin{pmatrix}
	[SI]\\
	[II]/2\\
	[IR]
	\end{pmatrix}'
	=A	
	\begin{pmatrix}
	[SI]\\
	[II]/2\\
	[IR]
	\end{pmatrix},
	$$
	with the matrix $A$ given by
	$$
	A=	
	\begin{pmatrix}
	\beta(n-2)-\gamma & 0 & \epsilon\\
	\beta & -2\gamma & 0\\
	0 & 2\gamma & -(\gamma+\epsilon)
	\end{pmatrix}.
	$$
We split $A=M-V$, with $V$ invertible, $M$ and $V^{-1}$ having non-negative entries. There are clearly many ways of doing that, but the preferred splitting is such that  $M$ and $V$ can be interpreted as the \emph{transmission} (i.e. relative to new infections) and \emph{transition} matrix (i.e. relative to any other change of state), respectively. Then, we compute
	$$
	R_1 = \rho(M V^{-1}),
	$$
	where $\rho$ indicates the spectral radius of a matrix. The choice for the two matrices is
		$$
	M=	
	\begin{pmatrix}
	\beta (n-1) & 0 & 0\\
	0 & 0 & 0\\
	0 & 0 & 0
	\end{pmatrix}, \quad
	V=
		\begin{pmatrix}
	\gamma+\beta & 0 &-\epsilon\\
	-\beta & 2\gamma & 0\\
	0 & -2\gamma & \gamma+\epsilon
	\end{pmatrix}.
	$$
	It can easily be checked, then, that $V^{-1}$ has non-negative entries, and that, since $M V^{-1}$ has two rows of zeros,
	\begin{equation}
\label{R1_eps}
\rho(M V^{-1})=(M V^{-1})_{1,1}=
	R_1:=\frac{\beta(n-1)(\gamma+\epsilon)}{\gamma(\gamma+\beta+\epsilon)}.
\end{equation}
This finishes the proof.
\end{proof}
\begin{remark}
	The perturbed $R_1$ given in (\ref{eqn:truebasicrep}) has a similar biological interpretation for the perturbed system to the one given for the corresponding $R_1$ (\ref{eqn:otherbas}) of the limit system as $\epsilon \rightarrow 0$. \\
We need to compute $R_1$, the average number of $SI$ edges produced by an $SI$ edge in a totally susceptible population; as in the previous case, an edge $SI$ will become an edge $II$ with probability $\beta/(\beta+\gamma)$, producing in this case $n-1$ edges $SI$; however, the original edge $II$, after having become $IR$ can become again an $IS$ edge with probability $\epsilon/(\epsilon+\gamma)$. After having returned $SI$, the edge will produce other $R_1$ $SI$ edges, since the pairwise model does not consider higher order correlation and does not ``remember'' that the neighbours of $S$ had already been infected once. Hence
	$$ R_1 = \frac{\beta}{ \beta+\gamma}\left( n-1 + \frac{\epsilon}{\epsilon + \gamma } R_1\right), $$
	from which one obtains \eqref{R1_eps}.\\
	Through this argument, we see that threshold for the SIRS model is different from the one for the SIR model, while in the homogeneous mixing case the two coincide.
\end{remark}
\begin{lemm}\label{convex}
System (\ref{eqn:sist2}) is well posed in the convex set 
\begin{equation}
\begin{split}
\Delta = \{& ([S],[I],[SS],[SI],[II])\in \mathbb{R}^5_{\geq 0} | \\
& 0 \leq [S]+[I] \leq 1 \} \cap \{ 0 \leq [SS]+[SI] \leq n[S], 0 \leq [SI]+[II]\leq n[I] \}.
\end{split}
\label{eqn:simp}
\end{equation}
The set is forward invariant under the flow of (\ref{eqn:sist2}), for $\epsilon \geq 0$, so that solutions of (\ref{eqn:sist2}) are global in time.
\end{lemm}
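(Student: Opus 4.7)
The plan is to invoke Nagumo's sub-tangentiality criterion: since $\Delta$ is given as an intersection of half-spaces $\{g_i \leq 0\}$, forward invariance amounts to showing that at every boundary point where some $g_i$ is active, the vector field satisfies $\langle \nabla g_i, F \rangle \leq 0$. I would organize the check face-by-face.

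For the non-negativity faces $\{[X]=0\}$ with $X \in \{[S],[I],[SS],[SI],[II]\}$, the other constraints of $\Delta$ conveniently pin down nearby variables. On $\{[S]=0\}$, the bound $0\leq [SS]+[SI] \leq n[S]=0$ forces $[SS]=[SI]=0$, so (\ref{eqn:sistS}) yields $[S]' = \epsilon(1-[I])\geq 0$; importantly, the apparent singularities with $[S]$ in the denominator vanish because each carries $[SI]^2$ or $[SS][SI]$ as a factor, and both are zero here. Analogously, on $\{[I]=0\}$ one gets $[SI]=[II]=0$ and $[I]'=0$. For $\{[SS]=0\}$, $\{[SI]=0\}$, and $\{[II]=0\}$ one reads directly from (\ref{eqn:sistSS})--(\ref{eqn:sistII}) that the corresponding derivative is non-negative once the relevant constraint is used, e.g. $[SS]' = 2\epsilon(n[S]-[SI]) \geq 0$ from $[SI]\leq n[S]$, and $[II]' = 2\beta[SI] + 2\beta\frac{n-1}{n}\frac{[SI]^2}{[S]} \geq 0$.

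For the upper-bounding faces, summing (\ref{eqn:sistS}) and (\ref{eqn:sistI}) gives $([S]+[I])' = -\gamma[I] \leq 0$ on $\{[S]+[I]=1\}$. The faces $\{[SS]+[SI]=n[S]\}$ and $\{[SI]+[II]=n[I]\}$ are cleanest after re-introducing the auxiliary variables $[SR]:=n[S]-[SS]-[SI]$ and $[IR]:=n[I]-[SI]-[II]$ and pulling their ODEs back from the pre-reduced system (\ref{eqn:nonstand}). On $\{[IR]=0\}$ one obtains $[IR]' = \gamma[II] + \frac{\beta(n-1)}{n}\frac{[SI][SR]}{[S]} \geq 0$, manifestly. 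On $\{[SR]=0\}$ one obtains $[SR]' = \gamma[SI] + \epsilon[RR]$, where $[RR]:=n(1-[S]-[I]) - [SR] - [IR]$; non-negativity here uses the auxiliary fact that $[RR]\geq 0$ is itself invariant ($[RR]' = 2\gamma[IR] \geq 0$ when $[RR]=0$), so the joint non-negativity of the triple $([SR],[IR],[RR])$ propagates along the flow.

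Global existence is then immediate: $\Delta$ is compact, the vector field of (\ref{eqn:sist2}) extends continuously to its boundary (the rational terms with $[S]$ in the denominator vanish identically on $\{[S]=0\}\cap\Delta$ for the reason noted above), so standard ODE theory gives local existence, and forward invariance traps every trajectory in the compact $\Delta$ for all $t\geq 0$. The hardest step, I expect, is the face $\{[SS]+[SI]=n[S]\}$: unlike the others, sub-tangentiality there is not directly visible from (\ref{eqn:sist2}) and genuinely needs the coupled invariance of $[RR]\geq 0$ via the pre-reduced equations. The remaining faces are essentially algebraic verifications.
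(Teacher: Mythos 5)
Your face-by-face Nagumo check is exactly the content that the paper compresses into the single sentence that forward invariance ``can be easily checked by showing that the flow is pointing inwards on the boundary of $\Delta$'', and on the faces $\{[S]=0\}$, $\{[I]=0\}$, $\{[SS]=0\}$, $\{[SI]=0\}$, $\{[II]=0\}$, $\{[S]+[I]=1\}$ and $\{[SI]+[II]=n[I]\}$ your computations are correct and use only inequalities that genuinely belong to $\Delta$; for instance on the last of these one finds $(n[I]-[SI]-[II])'=\gamma[II]+\beta(n-1)[SI]\bigl(1-\tfrac{[SS]+[SI]}{n[S]}\bigr)\geq0$, which is precisely your $[IR]'$ pulled back from (\ref{eqn:sist1}). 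The removability of the singularity at $[S]=0$ (via $[SI]/[S],[SS]/[S]\leq n$ on $\Delta$) and the compactness argument for global existence also match the paper's proof.

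There is, however, a genuine gap at the face you yourself single out as the hardest, $\{[SS]+[SI]=n[S]\}$, and the device you propose does not repair it. On that face one computes $(n[S]-[SS]-[SI])'=\gamma[SI]+\epsilon\bigl(n(1-[S]-[I])-[IR]\bigr)=\gamma[SI]+\epsilon[RR]$, and the sign of $[RR]$ is the whole issue: the inequality $[RR]\geq0$, i.e. $[SR]+[IR]\leq n(1-[S]-[I])$, is \emph{not} among the constraints defining $\Delta$ and is not implied by them. Your argument that ``$[RR]\geq0$ is itself invariant, so the joint non-negativity propagates'' is therefore circular relative to $\Delta$: it establishes forward invariance of the strictly smaller set $\Delta\cap\{[RR]\geq0\}$, not of $\Delta$. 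Concretely, the point $([S],[I],[SS],[SI],[II])=(0.2,\,0.8,\,0.2n,\,0,\,0)$ lies in $\Delta$ on this face, has $[RR]=-0.8n<0$, and satisfies $\frac{d}{dt}\bigl([SS]+[SI]-n[S]\bigr)=0.8n\epsilon>0$, so for $\epsilon>0$ the flow exits $\Delta$ there. The honest fix is to enlarge the list of defining inequalities by $[SR]+[IR]\leq n(1-[S]-[I])$ and to check the additional face $\{[RR]=0\}$ via $[RR]'=2\gamma[IR]\geq0$ (a computation you already have); this proves a slightly amended version of the lemma, and in doing so goes beyond what the paper's own two-line proof actually verifies. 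Note that for $\epsilon=0$ the offending term disappears and $\Delta$ itself is invariant under the layer flow, which is what the subsequent analysis of the fast subsystem actually uses.
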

\begin{proof}
Apparently the right-hand side of (\ref{eqn:sist2}) has a singularity at $[S] = 0$; however, in the set $\Delta$, the terms $[SI]/[S]$ and $[SS]/[S]$ are both bounded by $n$, so that the right-hand side is indeed Lipschitz. Hence, system (\ref{eqn:sist2}) has a local solution.
Furthermore, it can be easily checked that the system is forward invariant by showing that the flow is pointing inwards on the boundary of $\Delta$. Hence, solutions of system (\ref{eqn:sist2}) are global in time.
\end{proof}

\subsection{Fast limit}\label{subsec:fast}
In this section, we study the fast subsystem (or layer equations) corresponding to the limit of system \eqref{eqn:sist2} as $\epsilon\rightarrow 0$ on the fast time scale. Hence, we have to take the limit $\epsilon \rightarrow 0$ in system \eqref{eqn:sist2}, to obtain the layer equations
\begin{subequations}
\begin{align}
[S]'={}&-\beta [SI],\label{eqn:layer1}\\
[I]'={}&\beta [SI]- \gamma [I],\label{eqn:layer2}\\
[SS]'={}&-2\beta\frac{n-1}{n}\frac{[SS][SI]}{[S]},\label{eqn:layer3}\\
[SI]'={}&-(\gamma+\beta)[SI]+\beta\frac{n-1}{n}[SI]\bigg(\frac{[SS]}{[S]}-\frac{[SI]}{[S]}\bigg),\label{eqn:layer4}\\
[II]'={}&2\beta[SI]-2\gamma [II]+2\beta\frac{n-1}{n} \frac{[SI]^2}{[S]}.\label{eqn:layer5}
\end{align}\label{eqn:layer}%
\end{subequations}
For ease of notation, we introduce
\begin{equation*}
\begin{split}
[\cdot]_0={}&[\cdot](0),\\
[\cdot]_{\infty}={}&\lim_{t \rightarrow +\infty}[\cdot](t).\\
\end{split}
%\label{eqn:notation}
\end{equation*}
In the fast dynamics, the susceptible population can only decrease, and eventually the infected population will not have any more susceptibles to ``recruit'' and will decrease as well. In particular, we prove the following:
\begin{prop}\label{prop:fast} Consider system (\ref{eqn:layer});
$[S]$ and $[SS]$ are decreasing for all $t\geq 0$, and they tend to positive constants $[S]_{\infty}$ and $[SS]_{\infty}$. The variables $[I]$, $[SI]$, $[II]$ and $[IR]$ all have the limit $[I]_{\infty}=[SI]_{\infty}=[II]_{\infty}=[IR]_{\infty}=0$.
\end{prop}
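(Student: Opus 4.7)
The plan is to combine monotonicity with Barbalat-type arguments for the ``infected'' variables, and then to extract an explicit conservation law so as to upgrade the non-negative limits $[S]_\infty$ and $[SS]_\infty$ to strict positivity.

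First, on the forward-invariant set $\Delta$ of Lemma~\ref{convex}, the right-hand sides of (\ref{eqn:layer1}) and (\ref{eqn:layer3}) are non-positive (the factor $[SS]/[S]$ is bounded by $n$ on $\Delta$), so $[S]$ and $[SS]$ are non-increasing and, being bounded below by $0$, converge to limits $[S]_\infty,[SS]_\infty\geq 0$. Integrating (\ref{eqn:layer1}) yields $\int_0^\infty \beta[SI]\,dt = [S]_0 - [S]_\infty < \infty$; since the full right-hand side of (\ref{eqn:layer}) is bounded on $\Delta$, $[SI]$ is uniformly continuous, and Barbalat's lemma gives $[SI](t)\to 0$. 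Feeding this back into (\ref{eqn:layer2}) via variation of parameters produces $[I](t)\to 0$. For $[II]$, the bound $[SI]\leq n[S]$ available in $\Delta$ yields the comparison $[II]'\leq 2\beta n[SI]-2\gamma[II]$, and a standard convolution estimate together with $[SI](t)\to 0$ forces $[II](t)\to 0$. Finally, the conservation relation (\ref{eqn:const2}), preserved by the reduction to (\ref{eqn:sist2}), gives $[IR] = n[I]-[SI]-[II]\to 0$.

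The strict positivity of $[S]_\infty$ and $[SS]_\infty$ requires more than soft arguments, since monotonicity alone is compatible with $[S]_\infty = 0$. Dividing (\ref{eqn:layer3}) by (\ref{eqn:layer1}) yields $d\log[SS]/d\log[S] = 2(n-1)/n$, hence the conservation law
\begin{equation*}
[SS](t) \;=\; [SS]_0\bigl([S](t)/[S]_0\bigr)^{2(n-1)/n}.
\end{equation*}
Substituting this into (\ref{eqn:layer4}) and parametrising by $[S]$ in place of $t$, the equation for $[SI]$ becomes the first-order linear ODE
\begin{equation*}
\frac{d[SI]}{d[S]} - \frac{n-1}{n}\frac{[SI]}{[S]} \;=\; \frac{\gamma+\beta}{\beta} - \frac{n-1}{n}\frac{[SS]_0}{[S]_0^{2(n-1)/n}}\,[S]^{(n-2)/n},
\end{equation*}
which integrates in closed form with integrating factor $[S]^{-(n-1)/n}$. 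Setting $x = ([S]/[S]_0)^{1/n}$, the resulting expression for $[SI]$ takes the form $x^{n-1}g(x)$ with $g(1) = [SI]_0 \geq 0$ and $g(0) = [SI]_0 + [SS]_0 - n(\gamma+\beta)[S]_0/\beta < 0$, the last inequality coming from $[SI]_0+[SS]_0\leq n[S]_0$ and $\gamma>0$. The intermediate value theorem yields a root $x_\ast\in(0,1)$ of $g$; since $[SI]\geq 0$ along the trajectory, $[S](t)$ can never fall below $[S]_0 x_\ast^n$, whence $[S]_\infty \geq [S]_0 x_\ast^n > 0$, and the conservation law delivers $[SS]_\infty > 0$.

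The main obstacle is this last positivity step: the other conclusions follow by now-standard convergence techniques once $[S]_\infty>0$ is known, but ruling out $[S]_\infty = 0$ requires recognising the exact conservation law $[SS]/[S]^{2(n-1)/n}\equiv\text{const}$ and using it to reduce a two-dimensional dynamical question to a one-dimensional algebraic one. The degenerate case $[SI]_0 = 0$ is treated separately, since the system then starts at an equilibrium of (\ref{eqn:layer}) and the conclusion is immediate.
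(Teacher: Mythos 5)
Your proof is correct, but it reaches the conclusion by a genuinely different route than the paper. The paper introduces the projective variables $u=[SI]/[S]$, $v=[SS]/[S]$, shows that $u+v$ is strictly decreasing with $-\!\int_0^\infty (u'+v')\,dz \geq \gamma\int_0^\infty u\,dz$, and deduces $\int_0^\infty u\,dz<\infty$; positivity of $[S]_\infty$ and $[SS]_\infty$ then falls out for free from the closed forms $[S]_\infty=[S]_0\exp\bigl(-\beta\int_0^\infty u\bigr)$ and $v_\infty=v_0\exp\bigl(-\beta\tfrac{n-2}{n}\int_0^\infty u\bigr)$, and the same trick applied to $[S]+[I]$ handles $[I]$. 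You instead get $\int_0^\infty[SI]\,dt<\infty$ directly from the monotone convergence of $[S]$, and dispose of $[I]$, $[SI]$, $[II]$ via Barbalat plus convolution estimates — heavier machinery, but entirely standard and sound. For the harder positivity step you integrate the linear ODE for $[SI]$ as a function of $[S]$ using the conservation law $[SS]=[SS]_0([S]/[S]_0)^{2(n-1)/n}$ (the paper's \eqref{eqn:SSofSt}); your function $g$ is, up to the retained $[SI]_0$ term and a multiplicative constant, exactly the entry-point function $H$ of Proposition~\ref{prop:entryy}, so you have effectively pre-proved that result and extracted from it the quantitative lower bound $[S]_\infty\geq [S]_0x_*^n$, which the paper's softer argument does not give. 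Two small points to tighten: the root supplied by the intermediate value theorem must be chosen as the one bounding the region where $g<0$ near $0$ (e.g.\ $x_*=\sup\{x: g<0 \text{ on } [0,x]\}>0$), since only then does $[SI]\geq 0$ together with $[S](t)>0$ for all finite $t$ force $x(t)\geq x_*$; and in the degenerate case $[SI]_0=0$ the system is not at an equilibrium unless $[I]_0=[II]_0=0$ as well — rather, $[S]$, $[SS]$, $[SI]$ are frozen while $[I]$ and $[II]$ decay exponentially, which still gives the conclusion immediately.
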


\begin{proof}
We proceed to show the claims of the proposition: for $[SS]$ (and implicitly for $[SR]$, referring to (\ref{eqn:const1})), we give the limit value as a function of $[S]_\infty$, $[S]_0$ and $[SS]_0$. We introduce the auxiliary variables $u:=\frac{[SI]}{[S]}$ and $v:=\frac{[SS]}{[S]}$. From (\ref{eqn:layer1}) and (\ref{eqn:layer4}) we see that
\begin{equation*}
u'=-(\gamma+\beta)u+\beta u\bigg(\frac{n-1}{n}v+\frac{1}{n}u\bigg),
%\label{eqn:uprime}
\end{equation*}
while from (\ref{eqn:layer1}) and (\ref{eqn:layer3}) we see that
\begin{equation}
v'=-\beta \frac{n-2}{n}uv.
\label{eqn:vprime}
\end{equation}
From our analysis, for any initial point we have $0\leq [SS]+[SI]\leq n [S]$ and $[SS],[SI]\geq 0$. This implies that, for all times
\begin{equation*}
u\geq 0, \quad v\geq 0, \quad u+v\leq n.
%\label{eqn:uplusv}
\end{equation*}
Note that from (\ref{eqn:vprime}) $v$ is clearly decreasing for $n>2$, and we see that
\begin{equation}
u'+v'=-\gamma u -\beta u \bigg(1-\frac{u+v}{n}\bigg)<0.
\label{eqn:uplusvprime}
\end{equation}
Recall Lemma \ref{convex}, which implies $v\geq0$; if $v=0$, then $[SS]=0$, and from equation (\ref{eqn:layer3}) we observe that $[SS]$ will not change, so $0$ is its corresponding limit value. Assume then $v>0$: since $v'<0$, $v \rightarrow v_\infty$ monotonically as $t \rightarrow \infty$, and since $0\leq u+v\leq n$, this implies that $u \rightarrow u_\infty$ as $t \rightarrow \infty$ as well. Then we notice that
\begin{equation}
0<-\int_{0}^{\infty}(u'(z)+v'(z))~\text{d}z=u_0+v_0- u_\infty - v_\infty<\infty.
\label{eqn:uplusvint}
\end{equation}
We notice that we can rewrite (\ref{eqn:uplusvint}) using (\ref{eqn:uplusvprime}) and obtain
\begin{eqnarray}
\infty&>&-\int_{0}^{+\infty}(u'(z)+v'(z))\text{d}z=\int_{0}^{+\infty}\bigg(\gamma u(z) +\beta u(z) \bigg(1-\frac{1}{n}(u(z)+v(z))\bigg)\bigg)~\text{d}z\\
&>&\gamma \int_{0}^{+\infty}u(z)~\text{d}z.
\label{eqn:uminint}
\end{eqnarray}
This means that
\begin{equation}
\int_{0}^{+\infty}u(z) \text{d}z<+\infty \implies u_\infty=0,
\label{eqn:uint}
\end{equation}
which implies, recalling that $[SI]=u[S]$ and $[S]_\infty <\infty$, that $[SI]_\infty =0$. 
We can now rewrite (\ref{eqn:layer1}) as
\begin{equation*}
[S]'=-\beta u [S],
%\label{eqn:Sandu}
\end{equation*}
which implies 
\begin{equation}
[S]_\infty = [S]_0 \exp\bigg(-\beta \int_{0}^{+\infty}u(z) \text{d}z\bigg)>0,
\label{eqn:Sinfty}
\end{equation}
Similarly, using (\ref{eqn:vprime}), we can show that 
\begin{equation}
v_\infty = v_0 \exp\bigg(-\beta\frac{n-2}{n} \int_{0}^{+\infty}u(z) \text{d}z\bigg)>0,
\label{eqn:vinfty}
\end{equation}
which implies, using (\ref{eqn:uint}) and (\ref{eqn:Sinfty}), and recalling that $[SS]=v[S]$, that $[SS]_\infty >0$.
In particular, combining (\ref{eqn:Sinfty}) and (\ref{eqn:vinfty}), we can write
\begin{equation}
[SS]_\infty = [SS]_0 \bigg( \frac{[S]_\infty}{[S]_0}\bigg)^{\frac{2n-2}{n}}.
\label{eqn:SSinfty}
\end{equation}
We notice, from (\ref{eqn:const1}), that this implies that $[SR]$ converges to a non-negative limit as well.
Combining (\ref{eqn:layer1}) and (\ref{eqn:layer2}) as above, we show that $[I]$ vanishes as $t\rightarrow \infty$ as well:
\begin{equation*}
[S]'+[I]'=-\gamma[I]<0.
%\label{eqn:SplusI}
\end{equation*}
Since $[S]\rightarrow[S]_\infty$ as $t\rightarrow +\infty$, also $[I]\rightarrow [I]_\infty$. Proceeding as in (\ref{eqn:uminint}), it can be shown that $[I]_\infty=0$. This yields, by (\ref{eqn:const2}), that $[II]_\infty=0$ and $[IR]_\infty=0$.
\end{proof}
\noindent 
\begin{remark} A relation between $[SS](t)$ and $[S](t)$ for system (\ref{eqn:layer}) analogous to (\ref{eqn:SSinfty}) holds for all $t$. Indeed, noticing that
$$
V(t)=\ln([SS](t))-2\frac{n-1}{n} \ln([S](t)),
$$
is a constant of motion for system (\ref{eqn:layer}), we observe that for any $t\geq 0$ the relation
\begin{equation}\label{eqn:SSofSt}
[SS](t)=[SS]_0 \bigg(\frac{[S](t)}{[S]_0}\bigg)^{\frac{2n-2}{n}},
\end{equation}
holds.
\end{remark}
The equilibria of the limit system are all of the form $[S]=S^* \in [0,1]$, $[I]=0$, $[R]=1-S^*$; $[SS]=SS^* \geq 0$, $[SI]=0$, $[SR]=SR^*\geq0$, $[II]=0$, $[IR]=0$, $[RR]=RR^*\geq0$ with $SS^* + SR^* =nS^*$ and $SR^* + RR^* =n(1-S^*)$; i.e., they lie on the critical manifold (\ref{eqn:critical}).\\
The eigenvalues of the linearization of system (\ref{eqn:sist2}) on the critical manifold  
\begin{equation}
\mathcal{C}_0:=\{ ([S],[I],[SS],[SI],[II])\in \mathbb{R}^5_{\geq 0} | [I]=[SI]=[II]=0\},
\label{eqn:critical}
\end{equation}
are
\begin{equation*}
\lambda_1=\lambda_2=0,
%\label{eqn:eig0},
\end{equation*}
corresponding to the slow variables $[S]$ and $[SS]$,
\begin{equation*}
\lambda_3=\frac{\lambda_4}{2}=-\gamma<0,
%\label{eqn:eigneg},
\end{equation*}
and
\begin{equation}
\lambda_5=\beta\frac{(n-1)[SS]}{n[S]}-(\gamma+\beta)
\label{eqn:eigwsign}.
\end{equation}
In particular, $\lambda_5$ changes sign on the hyperplane $\beta(n-1)[SS]-n(\gamma+\beta)[S]=0$. We notice that $\beta (n-1)>0$, since we suppose $n>2$.\\
Considering (\ref{eqn:eigwsign}), we define the \emph{loss of hyperbolicity line} on the critical manifold $\mathcal{C}_0$
\begin{equation}
[SS]=\frac{n(\beta+\gamma)}{\beta(n-1)}[S]=:L[S].
\label{losshyp}
\end{equation}
We now give a closed formula for the value of $[S]_\infty$.
\begin{prop}\label{prop:entryy}
	Consider a generic initial condition $([S]_0,[SS]_0)$ in the repelling region of $\mathcal{C}_0$, i.e. satisfying $R_1[S]_0>1$ and $[SS]_0>L[S]_0$. The entry point $[S]_\infty$ is given as the unique zero smaller than $[S]_0$ of the function
	\begin{equation}
	H(x)=n\frac{\beta+\gamma}{\beta}(x^{\frac{1}{n}}-[S]_0^{\frac{1}{n}})-[SS]_0([S]_0^{\frac{2}{n}-2}x^{1-\frac{1}{n}}-[S]_0^{\frac{1}{n}-1}).
	\label{eqn:Hsinf}
	\end{equation}
\end{prop}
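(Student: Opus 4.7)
The plan is to exploit two conservation laws of the layer system \eqref{eqn:layer} to reduce the entry–exit problem to a scalar equation for $[S]_\infty$, and then to analyse that equation. First, I will recall from \eqref{eqn:SSofSt} that along any orbit of \eqref{eqn:layer} the ratio $v := [SS]/[S]$ can be written explicitly as $v = [SS]_0\,[S]_0^{-(2n-2)/n}\,[S]^{(n-2)/n}$. Next, I will derive a second first integral relating $u := [SI]/[S]$ and $v$: combining the expression for $u'$ (obtained by differentiating $u$ and using \eqref{eqn:layer1} and \eqref{eqn:layer4}) with $v'$ in \eqref{eqn:vprime}, the quotient $du/dv$ reduces to a linear first-order ODE in $u(v)$ with integrating factor $v^{1/(n-2)}$. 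Integration will yield
\begin{equation*}
u = \frac{n(\beta+\gamma)}{\beta} - v + C\, v^{-1/(n-2)},
\end{equation*}
so that the quantity $C = v^{1/(n-2)}(u + v - K)$, with $K := n(\beta+\gamma)/\beta$, is preserved along fast orbits.

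Then, I will interpret the entry–exit trajectory as the singular limit of layer orbits starting with $u_0 = \delta > 0$ at the base point $([S]_0,[SS]_0)\in\mathcal{C}_0$, letting $\delta \to 0^+$. In this limit $C \to v_0^{1/(n-2)}(v_0 - K)$ with $v_0 = [SS]_0/[S]_0$; by Proposition~\ref{prop:fast} the fast orbit returns to $u = 0$ at a positive value of $[S]$, so $v_\infty := [SS]_\infty/[S]_\infty$ satisfies
\begin{equation*}
v_\infty^{1/(n-2)}\bigl(v_\infty - K\bigr) = v_0^{1/(n-2)}\bigl(v_0 - K\bigr).
\end{equation*}
Substituting the explicit formula $v = [SS]_0\,[S]_0^{-(2n-2)/n}\,[S]^{(n-2)/n}$ makes the exponents $1/(n-2)$ and $(n-1)/(n-2)$ collapse to $1/n$ and $(n-1)/n$ respectively; after dividing by a common factor and rearranging, the relation becomes exactly $H([S]_\infty) = 0$. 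The initial point is trivially a root, $H([S]_0) = 0$, as one verifies by direct substitution.

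Finally, I will show uniqueness by a monotonicity analysis. A direct computation gives
\begin{equation*}
H'(x) = \frac{1}{n}\, x^{-1/n}\bigl(K\, x^{(2-n)/n} - (n-1)\,[SS]_0\,[S]_0^{2/n-2}\bigr),
\end{equation*}
which changes sign exactly once, at some $x^*>0$, so $H$ is strictly increasing on $(0,x^*)$ and strictly decreasing on $(x^*,\infty)$. The assumption $[SS]_0 > L[S]_0$ is equivalent to $(n-1)v_0 > K$, which yields $H'([S]_0) < 0$; hence $x^* < [S]_0$ and $H > 0$ throughout $(x^*, [S]_0)$. On $(0, x^*)$, $H$ increases monotonically from $H(0^+) = [S]_0^{1/n}(v_0 - K)$, and the invariant region \eqref{eqn:simp} forces $v_0 \leq n < K$, so this starting value is strictly negative; the intermediate value theorem then produces exactly one zero in $(0, x^*)$, which must be $[S]_\infty$. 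The hardest part of the argument will be the algebraic reduction from the $v$-equation to the stated form of $H$: one must verify that the substitution $v \mapsto [S]$ reproduces exactly both the coefficient $[SS]_0\,[S]_0^{2/n-2}$ and the telescoping constant $[S]_0^{1/n-1}$ appearing in $H$, which requires careful bookkeeping of the many fractional exponents involved.
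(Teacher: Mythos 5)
Your proposal is correct and follows essentially the same route as the paper: both arguments reduce to the same first integral of the layer flow (your conserved quantity $v^{1/(n-2)}(u+v-K)$ equals, up to the constant factor $a^{1/(n-2)}$ with $a=[SS]_0[S]_0^{2/n-2}$, the quantity $[SI][S]^{(1-n)/n}$ plus the explicit primitive that the paper obtains by its integrating factor $[S]^{(1-n)/n}$), and your monotonicity analysis of $H$ ($H(0)<0$, a single sign change of $H'$ at $x^*=[S]_*$, and $H'([S]_0)<0$ precisely when $[SS]_0>L[S]_0$) is the same as the paper's uniqueness argument. The only cosmetic differences are that you derive the first integral in the $(u,v)$ phase plane via the linear ODE for $du/dv$ rather than by time integration, and you handle the $\mathcal{O}(\epsilon)$ initial value of $[SI]$ by a limit $\delta\to 0^+$ rather than by discarding the $\mathcal{O}(\epsilon)$ boundary term.
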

\begin{proof}
	We proceed as in \cite[Sec. 3]{bidari2016solvability}. From our assumptions, $[SI](0)=\mathcal{O}(\epsilon)$.\\ 
	Combining (\ref{eqn:layer1}), (\ref{eqn:layer4}) and (\ref{eqn:SSofSt}), we obtain
	\begin{equation*}
	[SI]'-\frac{n-1}{n}\frac{[SI]}{[S]}[S]'=\frac{\beta+\gamma}{\beta}[S]'-\frac{n-1}{n}[SS]_0[S]_0^{\frac{2}{n}-2}[S]^{\frac{n-2}{n}}[S]'.
	%\label{eqn:SIplusS}
	\end{equation*}
	Multiplying both sides by the integrating factor $[S]^{\frac{1-n}{n}}$ we obtain
	\begin{equation}
	\frac{\textnormal{d}}{\textnormal{d}t}\bigg( [SI][S]^{\frac{1-n}{n}} \bigg)=\frac{\beta+\gamma}{\beta}[S]^{\frac{1-n}{n}}[S]'-\frac{n-1}{n}[SS]_0[S]_0^{\frac{2}{n}-2}[S]^{-\frac{1}{n}}[S]'.
	\label{eqn:tobeint}
	\end{equation}
	Integrating (\ref{eqn:tobeint}) from $t=0$ to $t=+\infty$, and recalling that, by Proposition \ref{prop:fast}, $[SI]_\infty =0$, we obtain
	\begin{equation}
	-[SI](0)S_0^{\frac{1-n}{n}}=n\frac{\beta+\gamma}{\beta} [S]^{\frac{1}{n}} \bigg|_{t=0}^{+\infty}-[SS]_0[S]_0^{\frac{2}{n}-2}[S]^{\frac{n-1}{n}} \bigg|_{t=0}^{+\infty}.
	\label{eqn:almost}
	\end{equation}
	Since, by assumption, the left-hand side of (\ref{eqn:almost}) is $\mathcal{O}(\epsilon)$, we ignore it, and we consider the right-hand side only. Hence, we find $[S]_\infty$ by solving
	$$
	n\frac{\beta+\gamma}{\beta} [S]^{\frac{1}{n}} \bigg|_{t=0}^{+\infty}=[SS]_0[S]_0^{\frac{2}{n}-2}[S]^{\frac{n-1}{n}} \bigg|_{t=0}^{+\infty},
	$$
	from which we immediately obtain that $[S]_\infty$ is given as a zero of the function $H(x)$ defined in (\ref{eqn:Hsinf}). We now prove that such a zero is unique.\\
	Recall $\frac{[SS]_0}{[S]_0}\leq n$; we have
	$$
	H(0)=[S]_0^{\frac{1}{n}}\left(-n\frac{\beta+\gamma}{\beta}+\frac{[SS]_0}{[S]_0}\right)<0, \quad H([S]_0)=0.
	$$
	Moreover,
	$$
	H'(x)=\frac{\gamma+\beta}{\beta}x^{\frac{1}{n}-1}-\frac{n-1}{n}[SS]_0[S]_0^{\frac{2}{n}-2}x^{-\frac{1}{n}}=x^{\frac{1}{n}-1}\bigg( \frac{\gamma+\beta}{\beta}- \frac{n-1}{n}[SS]_0[S]_0^{\frac{2}{n}-2}x^{1-\frac{2}{n}} \bigg).
	$$
	Recall (\ref{losshyp}). We see that $H'(x)>0$ for
	$$
	x<\bigg( \frac{L[S]_0}{[SS]_0} \bigg)^{\frac{n}{n-2}}[S]_0=:[S]_*([S]_0,[SS]_0).
	$$
	Clearly, $[S]_*=[S]_*([S]_0,[SS]_0)<[S]_0$, since we assumed $[SS]_0>L[S]_0$. Lastly,
	$$
	H'([S]_0)=[S]_0^{\frac{1}{n}-1}\bigg(\frac{\gamma+\beta}{\beta} - \frac{n-1}{n} \frac{[SS]_0}{[S]_0}\bigg)<0 \quad\hbox{ if }\quad [SS]_0>L[S]_0.
	$$
	Hence, $H(x)$ increases on the interval $[0,[S]_*)$, has a positive maximum in $x=[S]_*$, and then decreases towards $0$; in particular, it has a unique zero on the interval $[0,[S]_*)$, and hence in the interval $[0,[S]_0)$.
\end{proof}
\begin{remark}
	Recall (\ref{losshyp}) and Proposition \ref{prop:fast}. Given a couple $([S]_0,[SS]_0)$ in the repelling region $\mathcal{C}_0^R$ above the line $[SS]=L[S]$ (i.e., where $\lambda_5>0$), its image under the fast flow (\ref{eqn:layer}), approximated up to $\mathcal{O}(\epsilon)$ by formulas (\ref{eqn:Hsinf}) and (\ref{eqn:SSinfty}), is in the attracting region $\mathcal{C}_0^A$ below the line $[SS]=L[S]$ (i.e., where $\lambda_5<0$); refer to Figure \ref{fig:loss} for a visualization.
\end{remark} 
\begin{figure}[h!]\centering
	\begin{tikzpicture}
	\node at (0,0){
		\includegraphics[width=0.35\textwidth]{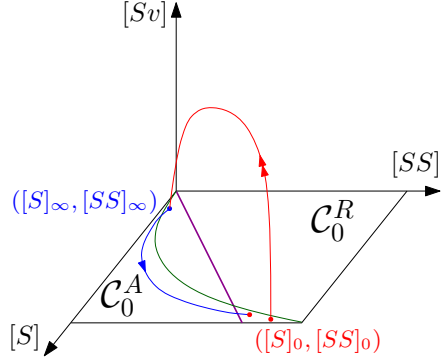}};
	\node at (-2.9,-2.1) {$[S]$};
	\node at (-1.3,2.2) {$[Sv]$};
	\node at (2.3,0.2) {$[SS]$};
	\node at (-1.6,-1.55) {\Large $\mathcal{C}_0^A$};
	\node at (1.2,-0.55) {\Large $\mathcal{C}_0^R$};
	\node at (1,-2.15) {\small \color{red}$([S]_0,[SS]_0)$};
	\node at (-2.1,-0.3) {\small\color{blue}$([S]_\infty,[SS]_\infty)$};
	%\node[rotate=32] at (0.25,-0.5){\color{green}$[SS]=\frac{n(\beta+\gamma)}{\beta(n-1)}[S]$};	
	\end{tikzpicture}
	\caption{Red curve: evolution of the point $([S]_0,[SS]_0)$ under the fast flow. Blue curve: evolution of the point $([S]_\infty,[SS]_\infty)$ under the slow flow. Green curve: curve $[SS]=\alpha([S])$ defined in (\ref{alpha}). Purple line: line of loss of hyperbolicity $[SS]=L[S]$ of the critical manifold of system (\ref{eqn:sist2}), which divides the attracting region $\mathcal{C}_0^A$ and the repelling one $\mathcal{C}_0^R$.}
	\label{fig:loss}
\end{figure}
\begin{remark} Recall (\ref{eqn:basicrep}), and that we assume $R_1>1$. Then  $L=\frac{n(\beta+\gamma)}{\beta(n-1)}=\frac{n}{R_1}<n$. Hence, the purple line $[SS]=L[S]$ in Figure \ref{fig:loss} is always below the line $[SS]=n[S]$. 
\end{remark}

\subsection{Equilibria of the perturbed system}

The following Lemma discusses the equilibria of system (\ref{eqn:sist2}).
\begin{prop}
For $\epsilon > 0$ sufficiently small and $R_0 > 1$, system (\ref{eqn:sist2}) has $2$ equilibria in the relevant region of $\mathbb{R}^5$.\\
\textbf{Disease free equilibrium:}
\begin{equation*}%\label{eqn:equifree}
[S]=1, \quad [I]=0, \quad [SS]=n, \quad [SI]=0, \quad [II]=0.
\end{equation*}
\textbf{Endemic equilibrium:}  to their first order on $\epsilon$ the components are given by:
\begin{equation}
\begin{gathered}\label{eqn:equiend}
[S]=\frac{(n-1)(\gamma +\beta)}{\left(n^2-n-1\right) \beta - \gamma } +\mathcal{O}(\epsilon), \\ [I]=\epsilon \frac{n ((n-2) \beta -\gamma)}{\gamma  \left( \left(n^2-n-1\right) \beta-\gamma \right)}+\mathcal{O}(\epsilon^2), \\
[SS]=\frac{n (\gamma +\beta )^2}{\beta ( \left(n^2-n-1\right) \beta - \gamma )}+\mathcal{O}(\epsilon),
\\ [SI]=\epsilon\frac{n ((n-2)\beta -\gamma )}{\beta  \left( \left(n^2-n-1\right) \beta-\gamma \right)}+\mathcal{O}(\epsilon^2), \\
[II]=\epsilon \frac{n ((n-2) \beta -\gamma)}{\gamma  \left( \left(n^2-n-1\right) \beta-\gamma \right)}+\mathcal{O}(\epsilon^2).
\end{gathered}
\end{equation}
\end{prop}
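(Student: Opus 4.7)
The plan is to set the right-hand sides of \eqref{eqn:sist2} to zero and analyze the algebraic system, splitting on whether $[I]=0$ or $[I]\neq 0$, and then to rigorously establish the endemic branch for $\epsilon>0$ small via the implicit function theorem after an appropriate $\epsilon$-rescaling of the infected compartments.

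\textbf{Disease-free case.} If $[I]=0$, then \eqref{eqn:sistI} forces $[SI]=0$, \eqref{eqn:sistS} forces $[S]=1$, and \eqref{eqn:sistSS}, \eqref{eqn:sistII} then give $[SS]=n$ and $[II]=0$, matching the stated equilibrium.

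\textbf{Endemic case.} Assume $[I]\neq 0$. Combining \eqref{eqn:sistS} and \eqref{eqn:sistI} at equilibrium yields $\gamma[I]=\epsilon(1-[S]-[I])$, so $[I]=\mathcal{O}(\epsilon)$; then $[SI]=\gamma[I]/\beta$ and (from \eqref{eqn:sistII}) $[II]=\beta[SI]/\gamma+\mathcal{O}(\epsilon^2)$ are also $\mathcal{O}(\epsilon)$. I would introduce the rescaling $[I]=\epsilon\tilde I$, $[SI]=\epsilon\tilde{SI}$, $[II]=\epsilon\tilde{II}$ and divide each equilibrium equation by a suitable power of $\epsilon$ to obtain a regular analytic system $G([S],[SS],\tilde I,\tilde{SI},\tilde{II},\epsilon)=0$. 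At $\epsilon=0$, besides the trivial relations $\gamma\tilde I=\beta\tilde{SI}$ and $\gamma\tilde{II}=\beta\tilde{SI}$, the system reads
\begin{equation*}
\beta\tilde{SI}=1-[S],\qquad n[S]-[SS]=\tfrac{\beta(n-1)}{n}\tfrac{[SS]\tilde{SI}}{[S]},\qquad \tilde{SI}\Bigl(\tfrac{\beta(n-1)}{n}\tfrac{[SS]}{[S]}-(\gamma+\beta)\Bigr)=0.
\end{equation*}
Since $\tilde{SI}\neq 0$, the third equation forces $[SS]/[S]=L$ with $L$ as in \eqref{losshyp}; substituting into the second yields $[S](n-L)=(\gamma+\beta)\tilde{SI}$, and combining with $\beta\tilde{SI}=1-[S]$ reduces to a single linear equation for $[S]$ whose solution, after collecting the identity $\beta(n-L)+(\gamma+\beta)=(\beta(n^2-n-1)-\gamma)/(n-1)$, simplifies to the leading-order value stated in \eqref{eqn:equiend}. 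The remaining components follow algebraically from $\tilde{SI}=(1-[S])/\beta$ and the trivial relations.

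To upgrade this leading-order computation to a genuine analytic branch for $\epsilon>0$ small, I would verify that $D_{([S],[SS],\tilde I,\tilde{SI},\tilde{II})}G$ is invertible at the limit point. Cofactor expansion along the $\tilde I$ and $\tilde{II}$ columns (each containing a single $-\gamma$ entry from the trivial relations) reduces the determinant to $\gamma^2$ times a $3\times 3$ minor in $([S],[SS],\tilde{SI})$; a short computation using $[SS]/[S]=L$ shows that this minor equals $(\beta\tilde{SI}/(n[S]))\bigl((\gamma+\beta)-\beta n(n-1)\bigr)=-(\beta\tilde{SI}/(n[S]))(\beta(n^2-n-1)-\gamma)$, which is nonzero precisely because $R_0>1$. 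The implicit function theorem then produces the desired branch, and positivity of all components follows directly from $R_0>1\Leftrightarrow\beta(n-2)>\gamma$, which simultaneously yields $\beta(n^2-n-1)>\gamma$ (so $[S]>0$) and $L<n$ (so $1-[S]>0$, hence $\tilde{SI},\tilde I,\tilde{II}>0$). The main obstacle is exactly this degeneracy: at $\epsilon=0$ the original system has the entire critical manifold $\mathcal{C}_0$ as equilibria, so the IFT cannot be applied directly — the $\epsilon$-rescaling of the infected compartments is what breaks the degeneracy, and even after rescaling the equation from $[SI]'=0$ remains delicate because its derivative in $\tilde{SI}$ still vanishes at the solution, which is why the full Jacobian computation above is needed rather than a one-equation argument.
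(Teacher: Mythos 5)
Your proposal is correct, and it is substantially more rigorous than the paper's own argument, which simply expands each variable in a formal power series in $\epsilon$ (e.g.\ $[S]=S_0+\epsilon S_1+\mathcal{O}(\epsilon^2)$), substitutes into \eqref{eqn:sist2}, and matches powers. Your leading-order algebra checks out: the identity $\beta(n-L)+(\gamma+\beta)=\bigl(\beta(n^2-n-1)-\gamma\bigr)/(n-1)$ is correct and reproduces exactly the values in \eqref{eqn:equiend}, and your $3\times3$ minor evaluates, with $a:=\beta\tfrac{n-1}{n}\tfrac{\widetilde{SI}}{[S]}$ and $[SS]/[S]=L$, to $-a\bigl((\gamma+\beta)+\beta(n-L)\bigr)=-\tfrac{\beta\widetilde{SI}}{n[S]}\bigl(\beta(n^2-n-1)-\gamma\bigr)$, nonzero since $n^2-n-1>n-2$ makes $R_0>1$ imply $\beta(n^2-n-1)>\gamma$. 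What your route buys over the paper's is genuine existence and local uniqueness of an analytic endemic branch: the formal matching procedure produces coefficients but does not by itself rule out that no actual equilibrium shadows the expansion, whereas your rescaling of the infected compartments removes the degeneracy of the critical manifold of equilibria at $\epsilon=0$ and lets the implicit function theorem do that work; you also correctly identify why a naive one-equation argument on the $[SI]$-equation fails (its $\widetilde{SI}$-derivative vanishes at the solution, so invertibility must come from the $[S]$ and $[SS]$ columns). The only point I would tighten is the count ``exactly $2$ equilibria'': the IFT gives uniqueness only near the limiting point, so to exclude other endemic equilibria in the relevant region you should add that the $\epsilon=0$ algebraic system with $\widetilde{SI}\neq0$ has a \emph{unique} solution there (your computation in fact shows this, since $[SS]/[S]=L$ is forced and then $[S]$ solves a linear equation), together with a compactness argument for the limit $\epsilon\to0$ of any putative family of equilibria; this is a minor addition, and the paper's own proof does not address it either.
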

\begin{proof}
The disease free equilibrium is trivial. The endemic equilibrium is computed by expanding the variables in power series of $\epsilon$, e.g. $[S]=S_0+\epsilon S_1 + \mathcal{O}(\epsilon^2)$, substituting them in system (\ref{eqn:sist2}), equating the right-hand sides to 0 and matching powers of $\epsilon$.
\end{proof}
\begin{remark} Since we assume $R_0=\frac{\beta(n-2)}{\gamma}>1$, recall Remark \ref{othererr} and (\ref{eqn:truebasicrep}), the numerators of $[I]$, $[SI]$ and $[II]$ of (\ref{eqn:equiend}), as well as all the denominators, are strictly positive for $\epsilon>0$ small enough.\end{remark}
We notice that the disease free equilibrium belongs to $\mathcal{C}_0$ defined in (\ref{eqn:critical}), and by computing the corresponding $\lambda_5 = \beta(n-2)-\gamma =\gamma (R_0-1)-\mathcal{O}(\epsilon)>0$, we show that it is unstable.\\
Moreover, we notice that the endemic equilibrium is $\mathcal{O}(\epsilon)$ close to the line $[SS]=L[S]$ defined in (\ref{losshyp}); hence, it approaches it as $\epsilon\rightarrow 0$.

\subsection{Slow manifold}\label{subsec:slow}

Next, we provide a multiple time scale description of the disease-free, or near disease-free states:

\begin{prop}\label{prop:exponn}
The slow manifold of system (\ref{eqn:sist2}) is exponentially close in $\epsilon$ to the critical manifold $\mathcal{C}_0$ given by (\ref{eqn:critical}).
\end{prop}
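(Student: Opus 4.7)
The plan has two steps: first verify that $\mathcal{C}_0$ is invariant for the \emph{full} perturbed system (\ref{eqn:sist2}) and not merely for the layer equations (\ref{eqn:layer}), and then invoke Fenichel's theorem together with the non-uniqueness of slow manifolds to conclude exponential closeness.

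\textbf{Step 1: Invariance of $\mathcal{C}_0$ for all $\epsilon\geq 0$.} Substitute $[I]=[SI]=[II]=0$ into the right-hand sides of (\ref{eqn:sistI}), (\ref{eqn:sistSI}), (\ref{eqn:sistII}). Every $\beta$-term in these three equations carries a factor of $[SI]$ and therefore vanishes. The equation (\ref{eqn:sistI}) has no $\epsilon$-term at all, (\ref{eqn:sistII}) has no $\epsilon$-term, and the $\epsilon$-term in (\ref{eqn:sistSI}) is $\epsilon(n[I]-[SI]-[II])$, which vanishes on $\mathcal{C}_0$. Thus $[I]'=[SI]'=[II]'=0$ on $\mathcal{C}_0$, so $\mathcal{C}_0$ is invariant under the flow of (\ref{eqn:sist2}) for every $\epsilon\geq 0$. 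On $\mathcal{C}_0$ the restricted flow is exactly the linear slow dynamics $[S]'=\epsilon(1-[S])$, $[SS]'=2\epsilon(n[S]-[SS])$.

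\textbf{Step 2: Exponential closeness.} On the attracting portion $\mathcal{C}_0^A$ where $\lambda_5<0$ (cf.\ (\ref{eqn:eigwsign})), together with the $\lambda_3=-\gamma$, $\lambda_4=-2\gamma$ eigendirections, the critical manifold is normally hyperbolic. Away from $[S]=0$ the right-hand side of (\ref{eqn:sist2}) is smooth by Lemma~\ref{convex}, and a local linear change of coordinates straightening the three stable fast fibre directions brings the system, in a neighbourhood of any compact subset $K\subset\mathcal{C}_0^A$, into standard fast–slow form (this is the non-standard-form reduction of \cite{wechselberger2020geometric}). Fenichel's theorem then produces a locally invariant slow manifold $\mathcal{C}_\epsilon$ which is $C^r$-close to $\mathcal{C}_0$. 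Because Fenichel slow manifolds are only unique up to errors of order $e^{-c/\epsilon}$, and because Step~1 shows that $\mathcal{C}_0$ itself is a legitimate locally invariant manifold admissible as a slow manifold, it follows that
\begin{equation*}
\mathrm{dist}(\mathcal{C}_\epsilon,\mathcal{C}_0)=\mathcal{O}\bigl(e^{-c/\epsilon}\bigr)
\end{equation*}
for some $c>0$ uniformly on $K$, which is the asserted exponential closeness.

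The only mild obstacle is the non-standard form of (\ref{eqn:sist2}) and the singularity at $[S]=0$; both are dealt with by restricting to compact subsets of $\mathcal{C}_0^A\cap\{[S]\geq \delta\}$ for any fixed $\delta>0$ and applying the standard Fenichel machinery in local coordinates there. The corresponding statement on the repelling branch $\mathcal{C}_0^R$ is identical after reversing time.
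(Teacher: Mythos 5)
Your proposal is correct and follows essentially the same route as the paper: verify by direct substitution that $\mathcal{C}_0$ remains invariant for the perturbed system (\ref{eqn:sist2}) with $\epsilon>0$, conclude that it is itself an admissible slow manifold, and then use the fact that Fenichel slow manifolds are unique only up to $\mathcal{O}(e^{-c/\epsilon})$ to deduce exponential closeness. Your added care about compact subsets away from $[S]=0$ and the local straightening of the fast fibres is a welcome refinement of the same argument, not a different one.
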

\begin{proof}
	The invariant manifold $\mathcal{C}_0$ is an invariant manifold also for system (\ref{eqn:sist2}) with $\epsilon>0$: by direct substitution, we have that $[I]'$, $[SI]'$ and $[II]'$ are zero on $\mathcal{C}_0$. Hence, $\mathcal{C}_0$ is invariant and satisfies all the conclusions of Fenichel's theorem, and so it is one possible slow manifold. By Fenichel's theorem, all slow manifolds are exponentially close to each other in the normally hyperbolic region; invariance allows us to extend at least one slow manifold across the line where we do not have normal hyperbolicity, namely $\mathcal{C}_0$.
\end{proof}
\noindent We provide an explicit computation of the slow manifold, expanding it in orders of $\epsilon$, in Appendix A.\\
\\
The slow dynamics on the slow manifold $[I]=[SI]=[II]=0$ are given by:
\begin{align*}
\begin{split}
[S]'={}& \epsilon   (1-[S]),\\
[SS]'={}&2\epsilon  (n[S]-[SS]),
\end{split}
%\label{eqn:slow1}
\end{align*}
which, rescaling the system to the slow time variable $\tau=\epsilon t$, becomes 
\begin{align}
\begin{split}
\dot{[S]}={}&   (1-[S]),\\
\dot{[SS]}={}&2  (n[S]-[SS]).
\end{split}
\label{eqn:slow2}
\end{align}
Recall that $[S]_\infty$ and $[SS]_\infty$ are the initial conditions for the slow flow. Solving (\ref{eqn:slow2}) explicitly yields
\begin{align}
\begin{split}
[S](\tau)={}& ([S]_\infty-1)e^{- \tau}+1,\\
[SS](\tau)={}&2 ([S]_\infty-1)n e^{-2\tau} (e^{ \tau}-1)+([SS]_\infty-n)e^{-2 \tau}+n,
\end{split}
\label{eqn:slowsol}
\end{align}
meaning that $[S]\rightarrow 1$, $[SS]\rightarrow n$ exponentially fast, as we would expect, since in the slow dynamics, on the node level, the variable $[R]$ can only decrease, and $[S]$ can only increase.\\
For its importance in the dynamics, we introduce the following notation
\begin{equation}
\Gamma := \{ ([S],[SS])\in [0,1]\times [0,n]| [SS]=n[S]^2  \}.
\label{eqn:slowpar}
\end{equation}

\begin{lemm}\label{unifattr}
	The parabola $\Gamma$ (\ref{eqn:slowpar}) is uniformly attracting for system (\ref{eqn:slow2}).
\end{lemm}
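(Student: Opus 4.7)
The plan is to exhibit a Lyapunov-type function measuring signed distance from $\Gamma$ and show it decays exponentially at a rate independent of the initial condition, which is precisely uniform attraction.

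Concretely, first I would define
\begin{equation*}
W([S],[SS]) := [SS] - n[S]^2,
\end{equation*}
so that $\Gamma = \{W = 0\}$. Along any solution of the slow system \eqref{eqn:slow2}, a direct differentiation gives
\begin{align*}
\dot W &= \dot{[SS]} - 2n[S]\,\dot{[S]} \\
&= 2\bigl(n[S] - [SS]\bigr) - 2n[S]\bigl(1 - [S]\bigr) \\
&= -2[SS] + 2n[S]^2 = -2W.
\end{align*}
Hence $W(\tau) = W(0)\,e^{-2\tau}$ for every trajectory, which immediately shows (i) that $\Gamma$ is invariant (since $W(0)=0$ forces $W\equiv 0$) and (ii) that the signed distance to $\Gamma$ decays exponentially with rate $2$, a constant independent of the initial condition. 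Uniform attraction follows.

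As a cross-check, one can substitute the explicit solutions in \eqref{eqn:slowsol} (with generic initial data $([S]_0,[SS]_0)$ in place of $([S]_\infty,[SS]_\infty)$) into $W$: expanding $n[S](\tau)^2$ and collecting the $e^{-\tau}$ terms against those of $[SS](\tau)$ yields a complete cancellation of the $e^{-\tau}$ contribution, leaving $W(\tau) = \bigl([SS]_0 - n[S]_0^2\bigr)e^{-2\tau}$, in agreement with the Lyapunov computation.

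There is no genuine obstacle here: the only thing to be careful about is algebraic, namely that the cross term $-2n[S]\dot{[S]}$ from the chain rule exactly cancels the ``linear in $[S]$'' part of $\dot{[SS]}$, producing a closed linear equation for $W$. Because the decay rate $2$ does not depend on $([S]_0,[SS]_0)\in[0,1]\times[0,n]$, the attraction is uniform on the entire physically relevant rectangle, which is the conclusion of the lemma.
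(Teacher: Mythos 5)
Your proof is correct and rests on exactly the same quantity as the paper's: the paper measures $d(\tau)=|[SS](\tau)-n[S]^2(\tau)|$ by substituting the explicit solution \eqref{eqn:slowsol} and simplifying to $d(\tau)=e^{-2\tau}d(0)$, whereas you obtain the identical conclusion slightly more cleanly by deriving the closed linear equation $\dot W=-2W$ for $W=[SS]-n[S]^2$ without needing the explicit solution at all. The only cosmetic caveat, shared with the paper, is that $W$ is the vertical displacement from $\Gamma$ rather than the Euclidean distance, which is immaterial for the conclusion since the two are comparable on the compact rectangle $[0,1]\times[0,n]$.
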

\begin{proof}
	Recall (\ref{eqn:slowsol}). Then, introducing the function $d(\cdot)$ to indicate the distance between a point and the parabola, we have
	\begin{equation*}
	\begin{split}
	d(\tau)=|[SS](\tau)-n[S]^2(\tau)|=&|2n ([S]_\infty-1) e^{- \tau} -2n ([S]_\infty-1) e^{-2\tau}+([SS]_\infty-n)e^{-2  \tau}+n-\\
	-n([S]_\infty -1)^2& e^{-2\tau}-n-2n([S]_\infty -1) e^{-\tau}|=|e^{-2\tau}([SS]_\infty -n[S]_\infty^2)|=e^{-2\tau}d(0),
	\end{split}
	%\label{eqn:parabbb}
	\end{equation*}
	which means that an orbit starting in any point $([S]_\infty, [SS]_\infty)\in (0,1)\times(0,n)$ approaches exponentially fast the parabola $\Gamma$ (\ref{eqn:slowpar}).
\end{proof}
\begin{lemm}
	Consider an orbit starting (i.e. exiting the slow manifold) $\mathcal{O}(\delta_2)$, where $0<\delta_2 \ll 1$, away from the parabola $[SS]=n[S]^2$, in a point with $[S](0)=[S]_0$ in the repelling region of $\mathcal{C}_0$, i.e. satisfying $R_1[S]_0>1$, $[SS]_0>L[S]_0$. Its entry point in the slow flow $[S]_\infty$ after a fast piece is given as the unique zero smaller than $[S]_0$ of
	\begin{equation}
	G(x) =\frac{\beta+\gamma}{\beta}(x^{\frac{1}{n}}-[S]_0^{\frac{1}{n}})- [S]_0^{\frac{2}{n}}x^{1-\frac{1}{n}}+[S]_0^{1+\frac{1}{n}}.
	\label{eqn:newsinf}
	\end{equation}
\end{lemm}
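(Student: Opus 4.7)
The plan is to derive this lemma as a direct specialization of Proposition \ref{prop:entryy} to initial conditions lying (nearly) on the parabola $\Gamma$ defined in (\ref{eqn:slowpar}). The hypothesis that the orbit leaves the slow manifold within $\mathcal{O}(\delta_2)$ of $\Gamma$ translates, for the fast subsystem, into initial data with $[SS]_0 = n[S]_0^2 + \mathcal{O}(\delta_2)$. First I would handle the unperturbed case $[SS]_0 = n[S]_0^2$: substituting this relation into (\ref{eqn:Hsinf}) and collecting the resulting common factor $n$ gives
\begin{equation*}
H(x) \;=\; n\left[\frac{\beta+\gamma}{\beta}\bigl(x^{1/n}-[S]_0^{1/n}\bigr) - [S]_0^{2/n}\,x^{1-1/n} + [S]_0^{1+1/n}\right] \;=\; n\,G(x),
\end{equation*}
so $H$ and $G$ have identical zero sets.

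Next, I would inherit the existence and uniqueness of a zero smaller than $[S]_0$ directly from Proposition \ref{prop:entryy}. I only need to verify its hypotheses in the present restricted setting: the standing assumption $R_1[S]_0 > 1$ gives $[S]_0 > 1/R_1 = L/n$, so $[SS]_0 = n[S]_0^2 > L[S]_0$ is automatic, which places the initial condition in the repelling region $\mathcal{C}_0^R$. The qualitative picture established in the proof of Proposition \ref{prop:entryy}, namely $H(0)<0$, $H([S]_0)=0$, $H$ increasing on $[0,[S]_\ast)$ and decreasing afterwards with a unique zero on $[0,[S]_\ast) \subset [0,[S]_0)$, then transfers verbatim to $G$.

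Finally, to propagate the $\mathcal{O}(\delta_2)$-perturbation of the initial datum to the conclusion, I would invoke the implicit function theorem at the entry point. Since the zero of $G$ lies strictly inside the interval $(0,[S]_\ast)$ on which $G'$ is positive, a perturbation of $[SS]_0$ of size $\mathcal{O}(\delta_2)$ perturbs the zero by the same order, a correction compatible with the asymptotic tolerance already implicit in the statement. The only genuinely new step is the factorization $H = n\,G$; the remainder of the argument is inherited from Proposition \ref{prop:entryy}, so I do not anticipate a significant obstacle beyond verifying this identity and checking that the hypotheses on $[S]_0$ carry over.
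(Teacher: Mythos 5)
Your proof is correct and takes essentially the same route as the paper: the paper likewise obtains $G$ by specializing the derivation of $H$ from Proposition \ref{prop:entryy} to the case $[SS]_0=n[S]_0^2$ (your factorization $H=nG$ is just the explicit algebraic form of that specialization) and inherits the uniqueness of the zero by applying that proposition to this initial condition. Your implicit-function-theorem handling of the $\mathcal{O}(\delta_2)$ offset is a slightly more explicit version of the paper's remark that, by Lemma \ref{unifattr}, starting close to the parabola is not restrictive.
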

\begin{proof} Notice that, considering Lemma \ref{unifattr}, the assumption of starting close to the parabola is not restrictive.
	The derivation of $G(x)$ is analogous to the derivation of $H(x)$ of Proposition \ref{prop:entryy}, using 
	$$	
	[SS](t)=n[S]_0^{\frac{2}{n}}([S](t))^{\frac{2n-2}{n}}
	$$
	instead of (\ref{eqn:SSofSt}), since we assume $[SS]_0=n[S]_0^2$. The uniqueness of the zero is obtained applying Proposition 3 to this specific initial condition.
\end{proof}
\begin{remark}\label{rem:nbig} Recall (\ref{eqn:SSinfty}). Since we showed that the parabola $\Gamma$ (\ref{eqn:slowpar}) is attracting in the slow flow, we can assume that, after the first slow piece of any orbit, $[SS]_0 = n[S]_0^2 +\mathcal{O}(\delta_1)$, where $0<\delta_1 \ll 1$. We can then rewrite (\ref{eqn:SSinfty}) as
\begin{equation*}
[SS]_\infty = [SS]_0 \bigg( \frac{[S]_\infty}{[S]_0}\bigg)^{\frac{2n-2}{n}}\approx n [S]_0^2 \bigg( \frac{[S]_\infty}{[S]_0}\bigg)^{\frac{2n-2}{n}}=n [S]_\infty^2 \bigg(\frac{[S]_\infty}{[S]_0}\bigg)^{-\frac{2}{n}},
\end{equation*}
where the $\approx$ symbol indicates an $\mathcal{O}(\delta_1)$ error.
For $n$ large enough, the last factor is close to 1, and the entry point for the slow flow is approximately on the parabola.\end{remark}

\subsection{Rescaling}

From now on, we are going to assume $n =\mathcal{O}(1)$.
As we showed in Section \ref{subsec:fast}, under the fast flow eventually $[I]$, $[SI]$ and $[II]$ will be $\mathcal{O}(\epsilon)$; recall (\ref{eqn:const2}), from which we see that $[I]=\mathcal{O}(\epsilon)$ implies $[SI],[II],[IR]=\mathcal{O}(\epsilon)$. Proceeding as in \cite{jardnkojakhmetov2020geometric}, we rescale $[I]=\epsilon[v]$. This implies, using (\ref{eqn:const2}),
\begin{equation*}%\label{eqn:approx}
[SI]= \epsilon[Sv], \quad [II]=\epsilon[vv].
\end{equation*}
This brings the model, after rearranging the variables, to a singularly perturbed system of ODEs, namely 
\begin{align}
\begin{split}
[S]'={}&-\epsilon \beta [Sv]+ \epsilon   (1-[S]-\epsilon [v]),\\
[SS]'={}&2\epsilon  (n[S]-[SS]-\epsilon[Sv])-2\epsilon \beta\frac{n-1}{n}\frac{[SS][Sv]}{[S]},\\
\epsilon [v]'={}&\epsilon \beta [Sv]- \epsilon \gamma [v],\\
\epsilon [Sv]'={}&-\epsilon (\gamma+\beta)[Sv]+\epsilon^2   (n[v]-[Sv]-[vv])+\epsilon \beta\frac{n-1}{n}[Sv]\bigg(\frac{[SS]}{[S]}-\epsilon \frac{[Sv]}{[S]}\bigg),\\
\epsilon [vv]'={}&2\epsilon \beta[Sv]-2\epsilon \gamma [vv]+\epsilon^2 \beta\frac{n-1}{n} \frac{[Sv]^2}{[S]},
\end{split}
\label{eqn:rescaleEE}
\end{align}
which can be rewritten in a standard form, and rescaled to the slow time scale, denoting now the time derivative with an overdot, giving
\begin{align}
\begin{split}
\dot{[S]}={}&- \beta [Sv]+    (1-[S]-\epsilon [v]),\\
\dot{[SS]}={}&2  (n[S]-[SS]-\epsilon[Sv])-2 \beta\frac{n-1}{n}\frac{[SS][Sv]}{[S]},\\
\epsilon \dot{[v]}={}& \beta [Sv]- \gamma [v],\\
\epsilon \dot{[Sv]}={}&-(\gamma+\beta)[Sv]+\epsilon   (n[v]-[Sv]-[vv])+ \beta\frac{n-1}{n}[Sv]\bigg(\frac{[SS]}{[S]}-\epsilon \frac{[Sv]}{[S]}\bigg),\\
\epsilon \dot{[vv]}={}&2\beta[Sv]-2 \gamma [vv]+\epsilon \beta\frac{n-1}{n} \frac{[Sv]^2}{[S]}.
\end{split}
\label{eqn:rescaleSLOW}
\end{align}
Taking now the $\lim_{\epsilon \rightarrow 0}$, we obtain the system of algebraic-differential equations
\begin{align}
\begin{split}
\dot{[S]}={}&- \beta [Sv]+    (1-[S]),\\
\dot{[SS]}={}&2  (n[S]-[SS])-2 \beta\frac{n-1}{n}\frac{[SS][Sv]}{[S]},\\
0={}& \beta [Sv]- \gamma [v],\\
0={}&-(\gamma+\beta)[Sv]+ \beta\frac{n-1}{n}\frac{[Sv][SS]}{[S]},\\
0={}&2\beta[Sv]-2 \gamma [vv].
\end{split}
\label{eqn:slowADE}
\end{align}
The last three equations of (\ref{eqn:slowADE}) are satisfied for $[v]=[Sv]=[vv]=0$. This is exactly the critical manifold of (\ref{eqn:sist2}), on which the dynamics is described by (\ref{eqn:slow2}).\\
Using (\ref{eqn:slow2}), we can show how $\lambda_5$ changes in time, in the slow flow, by deriving its formulation (\ref{eqn:eigwsign}) with respect to time, obtaining
\begin{equation*}
\dot{\lambda}_5=\beta   \frac{n-1}{n} \frac{2n[S]^2-[SS]([S]+1)}{[S]^2}.
%\label{lamcresc}
\end{equation*}
This implies that $\lambda_5$ is increasing if $[SS]<\alpha([S])$, where the function $\alpha$ is defined by
\begin{equation}
\alpha(x)=\frac{2nx^2}{x+1}.
\label{alpha}
\end{equation}
\begin{figure}[H]\centering
\begin{tikzpicture}
	\node at (0,0){
		\includegraphics[width=0.35\textwidth]{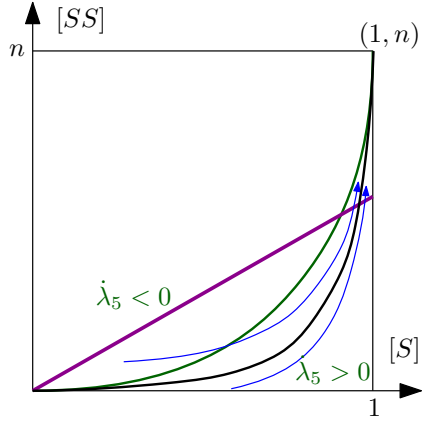}};
	\node at (2.4,-2) {$[S]$};
	\node at (2,-2.75) {$1$};
	\node at (-1.9,2.4) {$[SS]$};
	\node at (-2.75,2) {$n$};
	\node at (2.2,2.2) {$(1,n)$};
	\node at (-1.2,-1.25){\color{NewGGreen}$\dot{\lambda}_5<0$};
	\node at (1.45,-2.25){\color{NewGGreen}$\dot{\lambda}_5>0$};
\end{tikzpicture}
\caption{Sign of the derivative in time of $\lambda_5$ under the slow flow. Blue: sketch of orbit starting above/below the green curve $[SS]=\alpha([S])$. Purple: loss of hyperbolicity line $[SS]=L[S]$ (\ref{losshyp}). Black: the parabola $\Gamma$ (\ref{eqn:slowpar}). Notice that orbits always land below the purple line, which is the region of the rectangle in which the critical manifold is attracting.}
\label{fig:lam5}
\end{figure}%
In Figure \ref{fig:lam5} we visualize the behaviour of two orbits in the slow dynamics. We note that, even if an orbit enters the slow flow in a point below the purple line but above the green curve, i.e. in the region where $\dot{\lambda}_5<0$, it eventually has to cross the green line before crossing the purple curve, since they represent respectively $\dot{\lambda}_5=0$ and $\lambda_5=0$. Hence, any orbit will eventually evolve in the region $\dot{\lambda}_5>0$. We prove the following:
\begin{prop}\label{prop:below}
The subset $\{ ([S],[SS])\in (0,1)\times(0,n)| \dot{\lambda}_5>0 \}$ is forward invariant for system (\ref{eqn:slow2}).
\end{prop}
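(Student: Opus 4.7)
The region in question is $\Omega:=\{([S],[SS])\in (0,1)\times (0,n)\mid [SS]<\alpha([S])\}$, with $\alpha(x)=2nx^2/(x+1)$. Forward invariance of $\Omega$ under the planar slow flow (\ref{eqn:slow2}) is a boundary-crossing argument: I must show that on each piece of $\partial \Omega$ the vector field either points into $\Omega$ or is tangent in a non-exiting way.

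The non-trivial boundary is the curve $\Sigma:=\{[SS]=\alpha([S])\}$. Let $F([S],[SS]):=[SS]-\alpha([S])$, so that $\Omega=\{F<0\}$ and $\Sigma=\{F=0\}$. My plan is to compute $\dot F$ along (\ref{eqn:slow2}) and verify that $\dot F<0$ on $\Sigma\cap\{0<[S]<1\}$. Using $\alpha'(x)=\dfrac{2nx(x+2)}{(x+1)^2}$ and the identity $\alpha(x)=\dfrac{2nx^2}{x+1}$, a direct substitution of $\dot{[S]}=1-[S]$ and $\dot{[SS]}=2(n[S]-[SS])$ on $\Sigma$ gives, after a short simplification,
\begin{equation*}
\dot F\big|_{\Sigma}=\dot{[SS]}-\alpha'([S])\,\dot{[S]}=-\frac{2n[S](1-[S])}{([S]+1)^2},
\end{equation*}
which is strictly negative for $[S]\in(0,1)$. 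Hence, at every point of $\Sigma$ with $0<[S]<1$, the flow crosses $\Sigma$ transversally from $\{F>0\}$ into $\{F<0\}=\Omega$, so no trajectory can exit $\Omega$ across $\Sigma$.

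It remains to rule out escape through the other portions of $\partial\Omega$. The lines $[S]=0$ and $[SS]=0$ are handled by inspection: $\dot{[S]}=1>0$ on $\{[S]=0\}$ and $\dot{[SS]}=2n[S]\geq 0$ on $\{[SS]=0\}$, so in both cases the vector field points inward (the coordinate axes are repelling for the slow flow). The remaining boundary pieces $[S]=1$ and $[SS]=n$ only meet $\Omega$ at the equilibrium $(1,n)$ of (\ref{eqn:slow2}), which lies on $\Sigma$ (since $\alpha(1)=n$) and is fixed; no escape is possible there. Combining these observations yields forward invariance of $\Omega$.

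The main obstacle is purely computational: one has to be careful in simplifying $\dot F$ on $\Sigma$ so as to obtain the clean factorization above. Everything else is a routine check on the remaining boundary pieces, and the fact that the single interior equilibrium $(1,n)$ of the slow flow lies on the closure of $\Omega$ does not impede invariance because trajectories approach it asymptotically.
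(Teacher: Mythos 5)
Your proof is correct and follows essentially the same route as the paper: both reduce the claim to a transversality computation on the curve $[SS]=\alpha([S])$, the paper by bounding the scalar product of the vector field with the normal $(-\alpha'([S]),1)$ by $2(\alpha([S])-[SS])$, you by evaluating $\dot F$ exactly on the curve to get $-2n[S](1-[S])/([S]+1)^2<0$ --- and these two quantities coincide there. Your extra checks on the remaining boundary pieces are routine additions the paper omits (with the harmless imprecision that $\{[S]=1\}$ meets $\partial\Omega$ along a whole segment rather than only at $(1,n)$; no trajectory reaches it in finite time since $\dot{[S]}=1-[S]$).
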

\begin{proof}
The normal vector to the curve $\alpha([S])$ is given by $\nu=(-\dot{\alpha}([S]),1)$, with
\begin{equation*}
\dot{\alpha}([S])=\frac{2n[S]([S]+2)}{([S]+1)^2}.
\end{equation*}
If we take the scalar product of $\nu$ with the vector field $F$ given by (\ref{eqn:slow2}), we obtain
\begin{equation*}
\nu \cdot F = 2  \bigg( \frac{n(2[S]^3+3[S]^2-[S])}{([S]+1)^2} -[SS]  \bigg)<2  (\alpha([S]) -[SS]),
\end{equation*}
meaning that on the curve $[SS]=\alpha([S])$, this scalar product is negative, hence orbits approaching the curve from below will not cross it. 
\end{proof}

\begin{remark}
	By comparing (\ref{eqn:slowpar}) and (\ref{alpha}), we notice that the curve $\alpha$ is always above the parabola $\Gamma$; hence, by invariance of $\Gamma$ and Proposition \ref{prop:below}, an orbit starting above the parabola will eventually be ``squeezed'' between $\alpha$ and $\Gamma$.
\end{remark}

\subsection{Entry-exit function}\label{subsentrex}

Dividing the last three equations of system (\ref{eqn:rescaleEE}) by $\epsilon$ on both sides, we obtain
\begin{align}
\begin{split}
[S]'={}&\epsilon(- \beta [Sv]+ (1-[S]-\epsilon [v])),\\
[SS]'={}&\epsilon\bigg(2  (n[S]-[SS]-\epsilon[Sv])-2 \beta\frac{n-1}{n}\frac{[SS][Sv]}{[S]}\bigg),\\
[v]'={}& \beta [Sv]- \gamma [v],\\
[Sv]'={}&-(\gamma+\beta)[Sv]+\epsilon   (n[v]-[Sv]-[vv])+ \beta\frac{n-1}{n}[Sv]\bigg(\frac{[SS]}{[S]}-\epsilon \frac{[Sv]}{[S]}\bigg),\\
[vv]'={}&2\beta[Sv]-2 \gamma [vv]+\epsilon \beta\frac{n-1}{n} \frac{[Sv]^2}{[S]}.
\end{split}
\label{eqn:rescaleFAST}
\end{align}
System (\ref{eqn:rescaleFAST}) can be rewritten as
\begin{align}
\begin{split}
x'={}&\epsilon f(x,z)+\epsilon^2 m(z,w),\\
z'={}&zg(x,z)+\epsilon h(x,z,w),\\
w'={}&-Dw+Az+\epsilon l(x,z),
\end{split}
\label{eqn:short}
\end{align}
where we denote $x:= {{[S]}\choose{[SS]}}$, $z:=[Sv]$, and $w:={{[v]}\choose{[vv]}}$. The critical manifold $\mathcal{C}_0=\{z=0, w={{0}\choose{0}}\}$ is invariant for system (\ref{eqn:short}) both when $\epsilon>0$ and $\epsilon=0$. Recall (\ref{eqn:eigwsign}); it is clear that $g(x,0)=\lambda_5\lessgtr 0$ when $x\in \mathcal{C}_0^A$ or $x\in \mathcal{C}_0^R$, respectively.\\
To control the relation between the starting point of the slow dynamics and the transition point back to the fast dynamics, we are going to employ the entry-exit function~\cite{de2016entry}. This tool relies on calculating a fast variational equation along a slow orbit to calculate the exit point from the slow dynamics after a change from fast attraction to fast repulsion has taken place. We now describe this idea in more detail in our current setting, and we apply it to system (\ref{eqn:short}).\\
In the spirit of what was done in \cite{jardnkojakhmetov2020geometric}, we want to apply formula (12)-(13) of \cite{hsu2019relaxation}, in order to obtain more information on the slow part of the dynamics. Consider system (\ref{eqn:short}); the couple $(x,z)$ is in a formulation which allows us to apply the entry exit formula to it, ignoring the variable $w$, since its behaviour does not depend, in the limit as $\epsilon\rightarrow 0$, on the position of $x$ on the critical manifold, and $x$ and $z$ depend on $w$ only at an higher order of $\epsilon$ (second and first, respectively).\\
Recall (\ref{eqn:eigwsign}); from (\ref{eqn:slowsol}), we know that orbits starting in the region in which $\lambda_5<0$ will eventually reach the region in which $\lambda_5>0$, and remain in the latter. 
In the first part of this evolution, the system builds up attraction towards the slow manifold, but after the orbit crosses the loss of hyperbolicity line, the system starts to build up repulsion which will, eventually, compensate the attraction of the first part.\\
We denote with $x_0:=([S]_\infty,[SS]_\infty)$. Then, if $x(\tau;x_0)$ is the solution of
$$
\begin{cases}
\dot{x}{}&=f(x,0,0),\\
x(0){}&=x_0,
\end{cases}
$$
we can implicitly compute the exit time $T_E$ of an orbit on the slow manifold, applying formula (12) of \cite{hsu2019relaxation} to the couple $(x,z)$ of system (\ref{eqn:short}), through the integral
\begin{equation}
\int_{0}^{T_E} g(x(\tau;x_0),0)\textnormal{d}\tau =0.
\label{eqn:exitNO}
\end{equation}
Recall (\ref{eqn:slowsol}); for ease of notation, we introduce $A:=[S]_\infty -1 <0$ and $B:=[SS]_\infty -n<0$. Then, (\ref{eqn:exitNO}) becomes
\begin{equation}
\begin{split}
&\int_{0}^{T_E}\lambda_5(\tau)\textnormal{d}\tau  =\int_{0}^{T_E} \bigg( -(\gamma+\beta)+ \beta \frac{n-1}{n} \frac{[SS](\tau)}{[S](\tau)}  \bigg)\textnormal{d}\tau =\\
&\int_{0}^{T_E}  \bigg( -(\gamma+\beta)+ \beta \frac{n-1}{n}\frac{2A n e^{-2  \tau}(e^{  \tau}-1)+Be^{-2  \tau}+n}{Ae^{-  \tau}+1}   \bigg)\textnormal{d}\tau  =0,
\end{split}
\label{eqn:exitYE}
\end{equation}
which gives the following equation for $T_E$
\begin{equation}
\begin{split}
-(\gamma+\beta)T_E+\beta\frac{n-1}{n}&\bigg(\frac{A^2  n T_E +Ae^{-  T_E}(2An-B)+(B-A(A+2)n)\ln (Ae^{-  T_E}+1)}{A^2 }\\
&-\frac{2An-B}{A }\bigg)=0.
\end{split}
\label{eqn:exitTIME}
\end{equation}
Clearly, $T_E=0$ is a solution of (\ref{eqn:exitTIME}); the integrand of (\ref{eqn:exitYE}), i.e. $\lambda_5$, along the slow flow, is eventually always increasing, recall Proposition \ref{prop:below}; as we remarked, even though it is negative in the first part of the flow, it becomes eventually (and definitely) positive.
\begin{lemm}\label{lemm:finite}
	The exit time $T_E$ is finite for any initial point $([S]_\infty,[SS]_\infty) \in \mathcal{C}_0^A$.
\end{lemm}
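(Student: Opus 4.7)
The plan is to show that the defining integral equation \eqref{eqn:exitNO} has a finite positive solution by reducing to an intermediate-value argument on the scalar function $F(T) := \int_0^T \lambda_5(\tau)\,d\tau$. First, I would use the explicit slow-flow solution \eqref{eqn:slowsol} to regard $\lambda_5(\tau) = -(\gamma+\beta) + \beta\frac{n-1}{n}\frac{[SS](\tau)}{[S](\tau)}$ as a smooth scalar function of $\tau \geq 0$ only, so that $F$ is a well-defined $C^1$ function with $F(0) = 0$.

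Next I would analyse the sign of $\lambda_5$ at the endpoints of the trajectory. Since the initial point lies in $\mathcal{C}_0^A$, we have $[SS]_\infty < L[S]_\infty$ by \eqref{losshyp}, hence $\lambda_5(0) < 0$, so $F'(0) < 0$ and $F$ is strictly negative for small $T > 0$. On the other hand, from \eqref{eqn:slowsol}, $[S](\tau) \to 1$ and $[SS](\tau) \to n$ as $\tau \to \infty$, so
\begin{equation*}
\lambda_5(\tau) \longrightarrow -(\gamma+\beta) + \beta(n-1) = \beta(n-2) - \gamma = \gamma(R_0 - 1) > 0,
\end{equation*}
using the assumption $R_0 > 1$. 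By Proposition \ref{prop:below}, the slow orbit eventually enters and remains in the forward-invariant region $\{\dot{\lambda}_5 > 0\}$, so $\lambda_5(\tau)$ is ultimately monotonically increasing to this strictly positive limit. Consequently there exist a time $\tau_0 > 0$ and a constant $c > 0$ such that $\lambda_5(\tau) \geq c$ for all $\tau \geq \tau_0$, which yields $F(T) \geq F(\tau_0) + c(T - \tau_0) \to +\infty$ as $T \to \infty$.

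Combining the two facts, $F$ is continuous with $F(T) < 0$ on some interval $(0,\tau_1)$ and $F(T) \to +\infty$. By the intermediate value theorem there exists a finite $T_E \in (\tau_1,\infty)$ with $F(T_E) = 0$, which is precisely \eqref{eqn:exitNO}; this is the claimed finite exit time. I expect the main technical point, rather than an obstacle, to be the careful invocation of Proposition \ref{prop:below} to guarantee that once $\lambda_5$ crosses zero it stays uniformly bounded below by a positive constant on a neighbourhood of $+\infty$; without this uniform lower bound one could not rule out $F$ approaching $0$ only asymptotically. The positivity of $R_0 - 1$ is what makes this uniform bound available, and the rest is a routine continuity-and-sign argument.
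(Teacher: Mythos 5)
Your proof is correct and follows essentially the same route as the paper's: both show that $F(T)=\int_0^T\lambda_5(\sigma)\,\textnormal{d}\sigma$ is negative for small $T>0$ (because the entry point lies in $\mathcal{C}_0^A$) and tends to $+\infty$ as $T\to\infty$, and then conclude by the intermediate value theorem. The only cosmetic differences are that the paper reads the divergence directly off the closed-form expression \eqref{eqn:exitTIME}, whereas you derive it from $\lambda_5(\tau)\to\beta(n-2)-\gamma=\gamma(R_0-1)>0$ (which already gives the needed uniform lower bound without invoking Proposition \ref{prop:below}), and the paper appends a one-line uniqueness remark ($\lambda_5$ vanishes only once along the slow flow) that your argument could absorb with no extra work.
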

\begin{proof}
	Recall (\ref{eqn:exitYE}). For small positive values of $\tau$, $\lambda_5(\tau)<0$, since the slow dynamics begins in the attracting region $\mathcal{C}_0^A$. Hence, for small values of $\tau\geq 0$ the integral
	$$
	\int_{0}^{\tau} \lambda_5(\sigma)\textnormal{d}\sigma <0.
	$$
	From (\ref{eqn:exitTIME}), we observe that 
	$$
	\lim_{T_E \rightarrow +\infty }\int_{0}^{T_E} \lambda_5(\sigma)\textnormal{d}\sigma =+\infty,
	$$	
	hence there exists at least one finite $T_E$ which satisfies (\ref{eqn:exitNO}).
	From our previous analysis, we know that $\lambda_5(\tau)=0$ only once during the slow flow, and it remains positive afterwards; hence, such $T_E$ is unique. 
\end{proof}

\subsection{Application of the entry-exit formula to the parabola}

As we have remarked so far, the parabola (\ref{eqn:slowpar}) is of particular interest for the dynamics, even more so for large values of $n$. Hence, we are interested in understanding the entry-exit relation on this specific invariant set.
We now consider the evolution, under the slow flow, of the point $([S]_\infty,[SS]_\infty)=(0,0)$; with these initial conditions, (\ref{eqn:slowsol}) becomes
\begin{align}
\begin{split}
[S](\tau)={}& 1-e^{-  \tau},\\
[SS](\tau)={}&n+n e^{-2  \tau} -2ne^{- \tau}=n[S]^2(\tau).
\end{split}
\label{eqn:zerozero}
\end{align}
Being able to write $[SS]$ as a function of $[S]$ allows us to compute the exit point for the origin, which in general is not possible, since $\lambda_5$ depends on both slow variables. Combining (\ref{eqn:zerozero}) and (\ref{eqn:exitYE}) we obtain
\begin{equation}
\begin{split}
&\int_{0}^{[S]_1}\bigg( \frac{-(\gamma+\beta)+\beta(n-1)x}{1-x} \bigg) \textnormal{d}x =0\\ &\implies \beta (n-1)(1-[S]_1)+(\gamma-(n-2)\beta)\ln(1-[S]_1)-\beta(n-1)=0,\\
&\implies -\beta (n-1)[S]_1+(\gamma-(n-2)\beta)\ln(1-[S]_1)=0,
\end{split}
\label{eqn:exitzero}
\end{equation}
where $[S]_1$ indicates the exit point of the orbit which starts at the origin.\\
It can easily be shown, by direct substitution, that orbits with initial conditions $([S]_\infty,[SS]_\infty)=([S]_\infty,n[S]_\infty^2)$ evolve, under the slow flow (\ref{eqn:slowsol}), along the curve $[SS]=n[S]^2$; moreover, this follows from Lemma \ref{unifattr}. The exit point of such an orbit can be computed implicitly, with the same procedure as (\ref{eqn:exitzero}).
\begin{lemm}
Orbits entering the slow flow in a point of the form $([S]_\infty,[SS]_\infty)=([S]_\infty,n[S]_\infty^2)$ exit at a point of the form $([S]_1,n[S]_1^2)$, with $[S]_1$ given by
\begin{equation}
-\beta (n-1)[S]_1+(\gamma-(n-2)\beta)\ln(1-[S]_1)=-\beta (n-1)[S]_\infty+(\gamma-(n-2)\beta)\ln(1-[S]_\infty),
\label{eqn:exitpar}
\end{equation}
which can be equivalently rewritten, introducing for ease of notation $C:=((n-2)\beta-\gamma)/(\beta(n-1))$, as
\begin{equation}\label{eqn:aux}
(1-[S]_1)^C e^{[S]_1} = (1-[S]_\infty)^C e^{[S]_\infty}.
\end{equation}
\end{lemm}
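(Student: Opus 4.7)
The plan is to exploit two features simultaneously: first, that the parabola $\Gamma$ defined in (\ref{eqn:slowpar}) is invariant under the slow flow (\ref{eqn:slow2}), so that an orbit entering at $([S]_\infty, n[S]_\infty^2)$ stays on $\Gamma$ throughout the slow phase; and second, that on $\Gamma$ the eigenvalue $\lambda_5$ from (\ref{eqn:eigwsign}) collapses to a function of a single variable, reducing the entry-exit condition (\ref{eqn:exitNO}) to a one-dimensional integral that can be evaluated in closed form. Invariance of $\Gamma$ follows either by direct substitution of $[SS]_\infty = n[S]_\infty^2$ into the explicit solution (\ref{eqn:slowsol}) (which then collapses to $[SS](\tau) = n[S]^2(\tau)$), or as the $d(0)=0$ case of Lemma \ref{unifattr}; in either case the orbit trivially exits at a point of the form $([S]_1, n[S]_1^2)$, so the only thing to identify is $[S]_1$.

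Next, on $\Gamma$ the eigenvalue (\ref{eqn:eigwsign}) simplifies to
\begin{equation*}
\lambda_5([S]) \;=\; -(\gamma+\beta) + \beta(n-1)\,[S].
\end{equation*}
Using $\dot{[S]} = 1-[S]$ from (\ref{eqn:slow2}), I would perform the change of variable $x = [S](\tau)$, $\mathrm{d}x = (1-x)\,\mathrm{d}\tau$, transforming the entry-exit condition $\int_0^{T_E}\lambda_5(\tau)\,\mathrm{d}\tau = 0$ (guaranteed to have a unique finite root by Lemma \ref{lemm:finite}) into
\begin{equation*}
\int_{[S]_\infty}^{[S]_1} \frac{-(\gamma+\beta) + \beta(n-1)x}{1-x}\,\mathrm{d}x \;=\; 0.
\end{equation*}
Writing the numerator as $-\beta(n-1)(1-x) + \beta(n-2) - \gamma$, the integrand splits into $-\beta(n-1) + (\beta(n-2)-\gamma)/(1-x)$, and a direct antiderivative evaluation yields
\begin{equation*}
\Bigl[-\beta(n-1)x + (\gamma-(n-2)\beta)\ln(1-x)\Bigr]_{[S]_\infty}^{[S]_1} = 0,
\end{equation*}
which is exactly (\ref{eqn:exitpar}).

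Finally, to obtain (\ref{eqn:aux}), I would divide by $\beta(n-1)$, recognize $-(\gamma-(n-2)\beta)/(\beta(n-1)) = C$, rearrange to bring matching functions of $[S]_1$ and $[S]_\infty$ to opposite sides, and exponentiate, which transforms the logarithms into powers of the form $(1-\cdot)^C$ and the linear term into $e^{(\cdot)}$. The only genuinely delicate point is justifying that the one-dimensional integral really does locate the exit point: since $\Gamma$ is slow-flow invariant and the layer repulsion/attraction acts in the transverse direction $z = [Sv]$ of the reduced system (\ref{eqn:short}), the general entry-exit formula (12) of \cite{hsu2019relaxation}, which we already invoked in (\ref{eqn:exitNO})-(\ref{eqn:exitYE}), restricts verbatim to $\Gamma$ and yields a unique finite $[S]_1 > [S]_\infty$ by monotonicity of $\lambda_5$ along the parabola.
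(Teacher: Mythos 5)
Your proposal is correct and follows essentially the same route as the paper: the paper simply declares the result a ``straightforward computation from the integral in (\ref{eqn:exitzero}) with the lower bound $0$ replaced by a generic $[S]_\infty$'', and your argument — invariance of $\Gamma$, restriction of $\lambda_5$ to the parabola, the change of variables $\mathrm{d}x=(1-x)\,\mathrm{d}\tau$ in the entry-exit integral, and the antiderivative evaluation — is exactly that computation spelled out. The algebra leading to (\ref{eqn:exitpar}) and the exponentiation giving (\ref{eqn:aux}) both check out.
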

\begin{proof}
Straightforward computation from the integral in (\ref{eqn:exitzero}), where we substitute the lower bound of integration $0$ with a generic $[S]_\infty$.
\end{proof}
\begin{lemm}\label{lemm:monot}
If two entry points on the parabola satisfy $[S]_{\infty,1}<[S]_{\infty,2}$, then the corresponding exit points satisfy $[S]_{1,1}>[S]_{1,2}$.
\end{lemm}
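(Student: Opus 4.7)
The plan is to recast equation (\ref{eqn:aux}) as a level-set statement for the one-variable function
\[
\phi(x) := (1-x)^C \, e^{x}, \qquad x \in [0,1],
\]
and to extract monotonicity from the sign of $\phi'$. First I would compute
\[
\phi'(x) = (1-x)^{C-1} e^{x}\bigl(1-x-C\bigr),
\]
so that $\phi$ has a unique interior maximum at $x^\ast = 1-C$ and is strictly increasing on $[0,x^\ast)$ and strictly decreasing on $(x^\ast,1)$. Since $R_0>1$, i.e.\ $(n-2)\beta>\gamma$, we have $C>0$, and a direct substitution gives
\[
1-C \;=\; 1-\frac{(n-2)\beta-\gamma}{\beta(n-1)} \;=\; \frac{\beta+\gamma}{\beta(n-1)} \;=\; \frac{L}{n},
\]
so the unique maximum of $\phi$ occurs precisely at the $[S]$-coordinate where the parabola $\Gamma$ meets the loss-of-hyperbolicity line $[SS]=L[S]$.

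Next I would use the geometry from Section \ref{subsec:fast} and Proposition \ref{prop:below} to locate the entry and exit points on opposite sides of $x^\ast$. Entry into the slow flow happens in the attracting region $\mathcal{C}_0^A$, which on the parabola corresponds to $[S]_\infty < L/n = x^\ast$; conversely, by Lemma \ref{lemm:finite} the exit point satisfies $\lambda_5([S]_1,n[S]_1^2)>0$, so $[S]_1 > L/n = x^\ast$.

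Finally, equation (\ref{eqn:aux}) says exactly $\phi([S]_\infty) = \phi([S]_1)$. Given two entry points with $[S]_{\infty,1} < [S]_{\infty,2} \leq x^\ast$, strict monotonicity of $\phi$ on $[0,x^\ast)$ yields
\[
\phi([S]_{1,1}) \;=\; \phi([S]_{\infty,1}) \;<\; \phi([S]_{\infty,2}) \;=\; \phi([S]_{1,2}),
\]
and since $[S]_{1,1},[S]_{1,2} \in (x^\ast,1)$, where $\phi$ is strictly decreasing, this forces $[S]_{1,1} > [S]_{1,2}$, which is the claim.

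The only delicate step is verifying that the entry and exit points straddle $x^\ast$: the computation identifying $x^\ast$ with $L/n$ is what makes the argument clean, and it is essentially the geometric reason why the entry–exit map on $\Gamma$ is order-reversing. Everything else is one-variable calculus on $\phi$.
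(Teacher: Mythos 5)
Your proof is correct and follows essentially the same route as the paper: both define the function $h(x)=(1-x)^C e^x$ (your $\phi$), show it is unimodal with maximum at $1-C=1/R_1=L/n$, and conclude order-reversal from the entry points lying on the increasing branch and the exit points on the decreasing branch. Your explicit verification that the entry and exit points straddle $x^\ast$ (via the attracting/repelling regions and Lemma \ref{lemm:finite}) is a detail the paper leaves to Figure \ref{fig:acca}, but the argument is the same.
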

\begin{proof} Recall that the parabola is invariant under the slow flow. The entry-exit relation (\ref{eqn:aux}) implicitly defines a function
	$$
	h(x):=(1-x)^Ce^x,
	$$
meaning that the entry-exit relation can be written as $h([S]_\infty)=h([S]_1)$ (see Figure \ref{fig:acca} for a sketch of the function $h$, and a visualization of the argument of this proof). We observe that $h(0)=1$ and $h(1)=0$.
Deriving $h(x)$, we see that
	$$
	h'(x)=(1-x)^{C-1}(1-C-x)e^x>0 \iff x<1-C=\frac{\gamma+\beta}{(n-1)\beta}=\frac{1}{R_1}.
	$$ 
Hence, $h(x)$ is increasing before $x=1/R_1$, decreasing afterwards. This implies that if $[S]_{\infty,1}<[S]_{\infty,2}$ we have that $h([S]_{\infty,1})<h([S]_{\infty,2})$, and the corresponding exit points satisfy $[S]_{1,1}>[S]_{1,2}>1/R_1$.
\end{proof}

\begin{figure}[H]\centering
	\begin{tikzpicture}
	\node at (0,0){
		\includegraphics[width=0.45\textwidth]{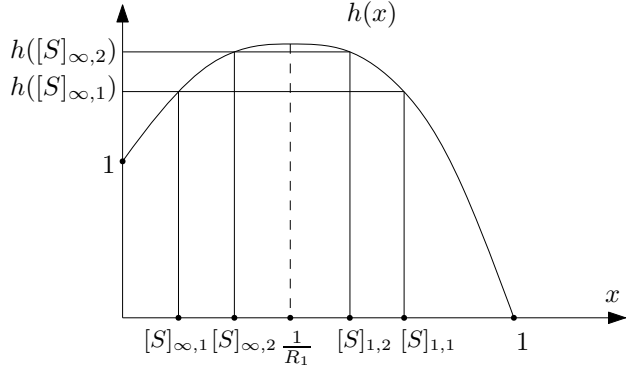}};
	\node at (3.2,-1.75) {$x$};
	\node at (2,-2.35) {$1$};
	\node at (-4.1,1.5) {$h([S]_{\infty,2})$};	
	\node at (-4.1,1) {$h([S]_{\infty,1})$};
	\node at (-3.5,0) {$1$};
	\node at (0.75,-2.35) {\small$[S]_{1,1}$};
	\node at (-0.1,-2.35) {\small$[S]_{1,2}$};	
	\node at (-1,-2.45) {$\frac{1}{R_1}$};
	\node at (-1.7,-2.35) {\small $[S]_{\infty,2}$};
	\node at (-2.6,-2.35) {\small$[S]_{\infty,1}$};	
	\node at (0,2) {$h(x)$};
	\end{tikzpicture}
	\caption{Sketch of the function $h(x)$ used in the proof of Lemma \ref{lemm:monot}.}
	\label{fig:acca}
\end{figure}%
The study of the asymptotic behaviour of system (\ref{eqn:sist2}) is then reduced to two 2-dimensional maps, from $\mathcal{C}_0$ to itself; specifically, we have that $\Pi_1([S]_0,[SS]_0)=([S]_\infty,[SS]_\infty)$, while $\Pi_2([S]_\infty,[SS]_\infty)=([S]_1,[SS]_1)$. We now explain the reasoning under the approximation that $\Pi_1$ and $\Pi_2$ map the parabola $\Gamma$ to itself, and can hence be seen as near one-dimensional (see Figure \ref{fig:conv}); in fact, the occurrence of near one-dimensional return maps is an important theme in multiple time scale systems~\cite{Boldetal,GuckenheimerWechselbergerYoung,KuehnRetMaps,Medvedev}.\\
Next, consider a point with $[S]$ coordinate $[S]_0$, $\mathcal{O}(\epsilon)$ away from the parabola $\Gamma$ (\ref{eqn:slowpar}), in the repelling part of the critical manifold. Its image $[S]_\infty$ under the fast flow, which defines the map $\Pi_1$ sketched in Figure \ref{fig:conv}, is given by (\ref{eqn:newsinf}). We notice that this value depends on both $\beta$ and $\gamma$, as well as on $n$. For $n$ large enough, the entry point in the slow flow will be close to the parabola, as argued in Remark \ref{rem:nbig}; hence, we will be able to compute its exit point $[S]_1$ using (\ref{eqn:exitpar}), which again depends explicitly on all the parameters of the system in a highly non-trivial way. This is different from the SIRWS model studied in \cite{jardnkojakhmetov2020geometric}, in which there was a clear separation between fast parameters, which dictated the fast dynamics, and had no influence on the slow one, and slow parameters, which characterised the viceversa. The map $\Pi_2$ in Figure \ref{fig:conv} sketches the relation between the entry point $[S]_\infty$ and its corresponding exit point $[S]_1$, i.e. (\ref{eqn:newsinf}).
\begin{figure}[h!]\centering
	\begin{tikzpicture}
	\node at (0,0){
		\includegraphics[width=0.5\textwidth]{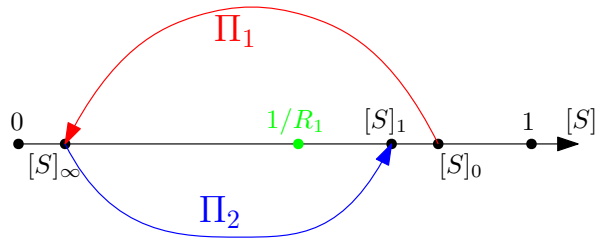}};
	\node at (3.1,0) {$1$};
	\node at (-3.7,0) {$0$};
	\node at (3.8,0) {$[S]$};
	\node at (-1,-1.2) {\Large \color{blue} $\Pi_2$};
	\node at (-0.8,1.2) {\Large \color{red} $\Pi_1$};
	\node at (2.2,-0.6) {$[S]_0$};
	\node at (1.2,0) {$[S]_1$};
	\node at (-3.2,-0.6) {$[S]_\infty$};
	\node at (0,0) {\color{green} $1/R_1$};
	\end{tikzpicture}
	\caption{Sketch of the map which relates $[S]_0$ to $[S]_\infty$ (red) and $[S]_\infty$ to $[S]_1$ (blue). The green dot represents the value $1/R_1$:  the epidemics can only start for values of $[S]_0>1/R_1$.}
	\label{fig:conv}
\end{figure}\\
Depending on the relative position of $[S]_0$ and $[S]_1$, we might be able to deduce the asymptotic behaviour of the system. However, the high dimensionality of the layer equation and the complex implicit relation between $[S]_0$ and $[S]_\infty$ hinders the analysis of the system with non-numerical tools. See Figure \ref{fig:compar} for comparisons between formula (\ref{eqn:newsinf}) and direct integration of the layer system (\ref{eqn:layer}).
\begin{figure}[h!]\centering	
	\begin{subfigure}[t]{.32\textwidth}\centering
		\includegraphics[width=\textwidth]{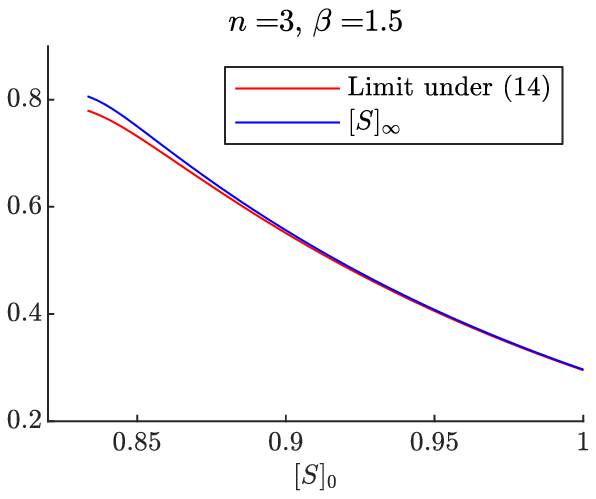}
		\caption{}
		\label{fig:n_3}
	\end{subfigure}\hfill
	\begin{subfigure}[t]{.32\textwidth}\centering
		\includegraphics[width=\textwidth]{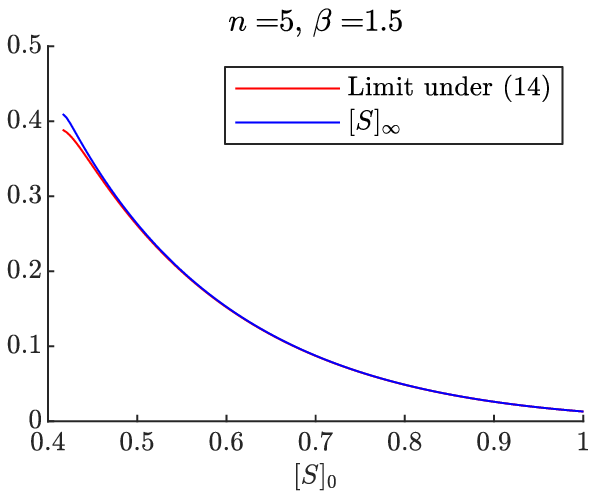}
		\caption{}
		\label{fig:n_5}
	\end{subfigure}\hfill
	\begin{subfigure}[t]{.32\textwidth}\centering
		\includegraphics[width=\textwidth]{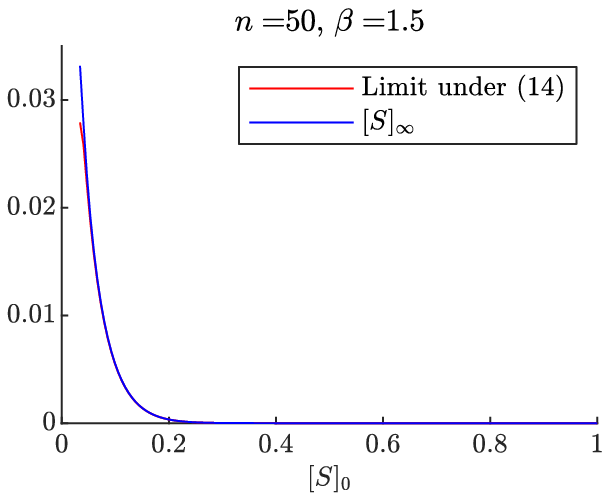}
		\caption{}
		\label{fig:n_50}
	\end{subfigure}
	\caption{Comparison of the limit value of $[S]$ as $t\rightarrow \infty$ of system (\ref{eqn:layer}) (red) and formula (\ref{eqn:newsinf}) (blue). We set $[I]_0=[SI]_0=[II]_0=0.001$, $\gamma =1$. With the values of the parameters of (a) (respectively, (b) and (c)), $1/R_1\approx 0.833$ (resp., $0.417$ and $0.034$), and we only consider values of $[S]_0\geq 1/R_1$, for which the epidemics can start.}
	\label{fig:compar}
\end{figure}\\
We proceed now to a bifurcation analysis of system (\ref{eqn:sist2}), and finally, with a technique similar to the one detailed \cite[Sec.~3.4.1]{jardnkojakhmetov2020geometric}, to numerically investigate the existence of periodic orbits by concatenation of fast and slow pieces. We stress the versatility of the numerical argument we present, which is similar to the one we used in \cite{jardnkojakhmetov2020geometric}, applied now to a higher dimensional system. 

\section{Bifurcation analysis and numerical simulations}\label{sec:bifurc}

In this section, we carry out a bifurcation analysis for the behaviour of system (\ref{eqn:sist2}), which will then be verified by numerical simulations and by a geometrical argument. Bifurcation analysis is done on system (\ref{eqn:sist2}), which for small values of $\epsilon$ is stiff (as we showed in Proposition \ref{prop:exponn}, the slow manifold is exponentially close to the critical manifold), while the numerical simulation concern a combination of systems (\ref{eqn:layer}) and (\ref{eqn:slow2}), which are both non-stiff.\\
It is important to notice that, even though the layer system (\ref{eqn:layer}) converges to the critical manifold forwards in time, the slow flow (\ref{eqn:slow2}) would converge to the point $([S],[SS])=(1,n)$ if we let it evolve freely; the derivation of the exit time (\ref{eqn:exitTIME}) is fundamental, in this setting, to carry out a meaningful numerical exploration of the model.\\
Without loss of generality, we set $\gamma$, which is the inverse of the average infection interval, to 1; this simply amounts to an $\mathcal{O}(1)$ rescaling of time, and we rescale the other parameters accordingly, keeping however the same symbols, for ease of notation. System (\ref{eqn:sist2}) then has only three parameters, namely $\epsilon$, $n$ and $\beta$.\\
Using MatCont \cite{MatCont}, we are able to completely characterize system (\ref{eqn:sist2}) through numerical bifurcation analysis. We only consider the first octant of $\mathbb{R}^3$, for the biological interpretation of the parameters. Numerical analysis shows the existence of a Hopf surface $\Sigma$, whose ``skeleton'' is depicted in Figure \ref{fig:surfa}. For values of the parameters between the plane $\epsilon=0$ and $\Sigma$, the system exhibits a stable limit cycle, while for values above $\Sigma$, the system exhibits convergence to the endemic equilibrium (\ref{eqn:equiend}). Our bifurcation analysis suggests the existence of a value $\epsilon^* \approx 0.18$ such that, for $\epsilon>\epsilon^*$, the system only exhibits convergence to the endemic equilibrium, regardless of the values of $\beta$ and $n$.
\begin{figure}[h!]\centering
	\begin{tikzpicture}
	\node at (0,0){
		\includegraphics[width=0.75\textwidth]{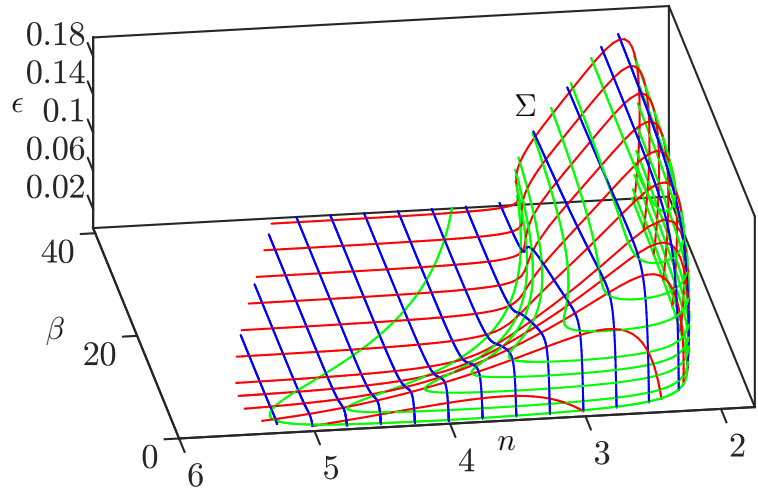}};
	\node at (1.5,-3) {\large $n$};
	\node at (-5,1.5) {\large $\epsilon$};
	\node at (-4.5,-1.5) {\large $\beta$};
	\node at (1.75,1.5) {\large$\Sigma$};
	\end{tikzpicture}
	\caption{A skeleton of the bifurcation surface $\Sigma$. Green (respectively, red and blue) curves correspond to constant values of $\epsilon$ (respectively, $\beta$ and $n$). We notice that, for values of $n\geq 6$, system (\ref{eqn:sist2}) converges to the endemic equilibrium (\ref{eqn:equiend}) regardless of the value of $\epsilon$ and $\beta$.}
	\label{fig:surfa}
\end{figure}\\
To make Figure \ref{fig:surfa} more readable, we provide intersections of the surface $\Sigma$ with some planes $n=k$ (Figure \ref{fig:enn}), $\beta=k$ (Figure \ref{fig:tau}), and finally $\epsilon = k$ (Figure \ref{fig:eps}).
\begin{figure}[h!]
	\begin{subfigure}[t]{.475\textwidth}\centering
		\begin{tikzpicture}
		\node at (0,0){\includegraphics[width=\textwidth]{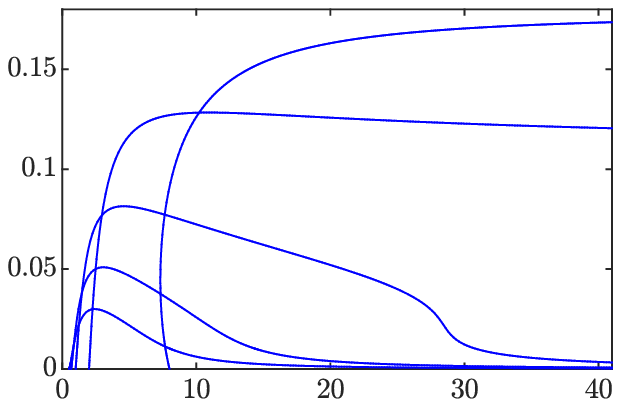}};
		\node at (-3,-0.25){$\epsilon$};
		\node at (-0.5,-2.2){$\beta$};
		\node at (-2,-1.6){\color{blue} \small $4$};
		\node at (-0.9,-1.2){\color{blue} \small $3.5$};
		\node at (0,-0.5){\color{blue} \small $3$};	
		\node at (2.4,0.4){\color{blue} \small $2.5$};
		\node at (2.5,1.4){\color{blue} \small $2.125$};
		\end{tikzpicture}
		\caption[]{Intersections of the surface $\Sigma$ with planes $n=k$. The values of $n$ are indicated near the corresponding curves.}
		\label{fig:enn}
	\end{subfigure}\hfill
	\begin{subfigure}[t]{.475\textwidth}\centering
		\begin{tikzpicture}
		\node at (0,0){\includegraphics[width=\textwidth]{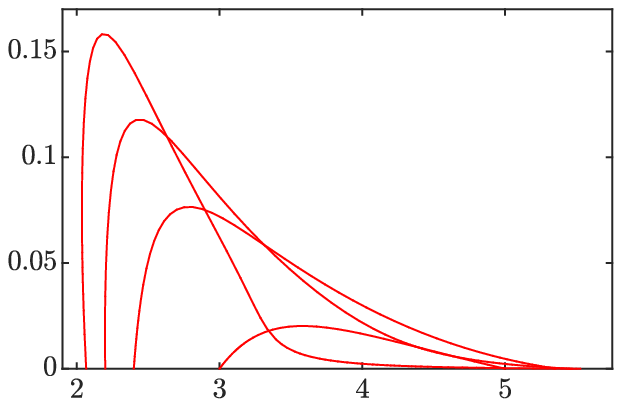}};
		\node at (-3,-0.25){$\epsilon$};
		\node at (-0.5,-2.2){$n$};
		\node at (-0.3,-1.2){\color{red} \small $1$};
		\node at (-1.4,-0.7){\color{red} \small $2.5$};	
		\node at (-1.2,0.4){\color{red} \small $5$};
		\node at (-1.6,1.1){\color{red} \small $15$};
		\end{tikzpicture}	
		\caption[]{Intersections of the surface $\Sigma$ with planes $\beta=k$. The values of $\beta$ are indicated near the corresponding curves.}		
		\label{fig:tau}
	\end{subfigure}
	\caption{A subset of the blue and red curves from Figure \ref{fig:surfa}.}
	\label{fig:bifs}
\end{figure}

\begin{figure}[h!]\centering
	\begin{tikzpicture}
	\node at (0,0){
		\includegraphics[width=0.65\textwidth]{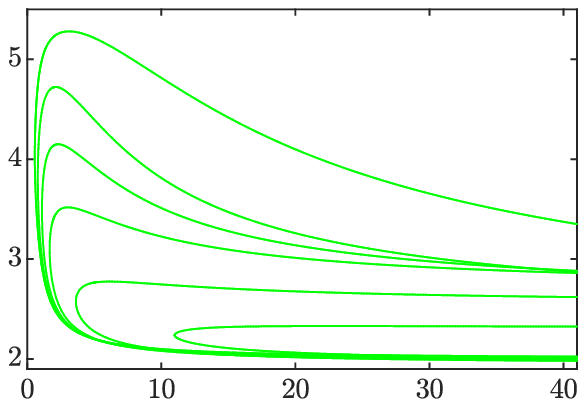}};
	\node at (-4,-0.5){$n$};
	\node at (1,-3){$\beta$};	
	\node at (-1,-1.8){\color{green} \small $0.15$};
	\node at (-2,-1.2){\color{green} \small $0.1$};	
	\node at (-2.8,-0.75){\color{green} \small $0.05$};
	\node at (-2.8,0.05){\color{green} \small $0.025$};	
	\node at (-2,0.8){\color{green} \small $0.01$};
	\node at (-1.5,1.8){\color{green} \small $0.001$};
	\end{tikzpicture}
	\caption{Intersections of the surface $\Sigma$ with planes $\epsilon =k$. The values of $\epsilon$ are indicated near the corresponding curves.}
	\label{fig:eps}
\end{figure}
As in \cite{jardnkojakhmetov2020geometric}, we see an expansion of the parameter region which exhibits stable limit cycles as $\epsilon$ decreases, see Figure \ref{fig:eps}. This means that, as $\epsilon$ decreases, i.e. as the ratio between the average lengths of the infectious phase and the immunity interval decreases, we are more likely to observe occurrence of stable limit cycles in the disease dynamics. We do not observe, however, a divergence in the $n$ direction, as the limit as $\epsilon \rightarrow 0$ of the surface contained in the green curves of Figure \ref{fig:eps} is still bounded.\\
Counter-intuitively from a
biological viewpoint, from which one would expect a greater diffusion of an epidemic in a population consisting of more social individuals, our numerical exploration of system (\ref{eqn:sist2}) shows that limit cycles are only possible for small values of $n$ (specifically $3$, $4$ and $5$). This means that, for a disease with small enough ratio between the infection period and the immunity window (i.e., $\epsilon$), each individual in the population having really few direct neighbours can lead, depending on the force of infection $\tau$, to periodic outbreaks, while having more than $5$ drives the population towards the unique endemic equilibrium. The homogeneous mixing hypothesis can be interpreted, in this network setting, as having $n$ large. In this regard, the bifurcation analysis is in agreement with the results of \cite{jardnkojakhmetov2020geometric}, in which the SIRS model with homogeneous mixing is characterized by convergence towards the endemic equilibrium, as long as $R_0>1$; recall that the endemic equilibrium (\ref{eqn:equiend}) is characterized by a quantity of infected which is $\mathcal{O}(\epsilon)$ small.\\
In order to verify the accuracy of the surface $\Sigma$, we investigate the system via a numerical implementation of the same geometrical argument used in \cite[Sec. 3.4.1]{jardnkojakhmetov2020geometric}. There, we numerically showed the existence of a candidate orbit by concatenating heteroclinic orbits of the layer equation, from the critical manifold to itself, and orbits of the slow flow, truncating each at the corresponding exit time. The system studied in \cite{jardnkojakhmetov2020geometric} was 3-dimensional, but the slow flow evolved on a 2-dimensional plane in $\mathbb{R}^3$; as we showed thus far, system (\ref{eqn:sist2}) is characterized by a 2-dimensional slow manifold, as well. We now briefly recall the construction of the geometrical argument.\\
We fix $\epsilon = 0$, $n=4$, and vary $\beta$ to be below and above $\Sigma$, respectively; we compare the results in Figure \ref{fig:numerics}. In both cases, a candidate starting point for a periodic orbit was found by iterating multiple times the layer system (\ref{eqn:layer}) and the slow flow (\ref{eqn:slow2}), stopped when the slow piece of the orbit reached its exit time (\ref{eqn:exitTIME}).
\begin{figure}[htb!]
	\centering
	\begin{minipage}{\textwidth}
		\centering
		\begin{subfigure}{.45\textwidth}
			\centering
			\includegraphics[width=0.95\textwidth]{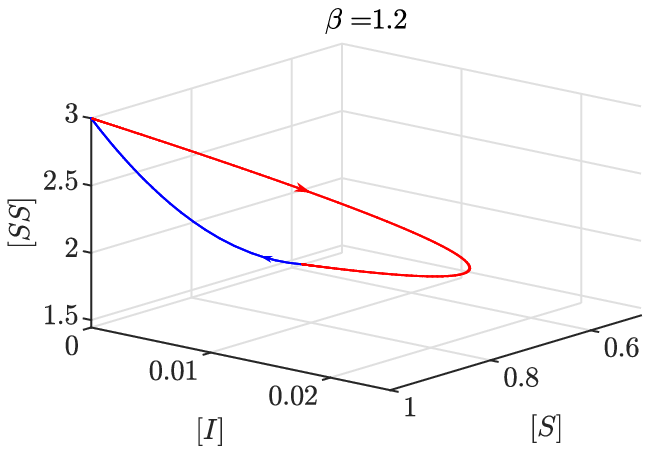}
			\caption{}
		\end{subfigure}
		\begin{subfigure}{0.45\textwidth}
			\centering
			\includegraphics[width=0.95\textwidth]{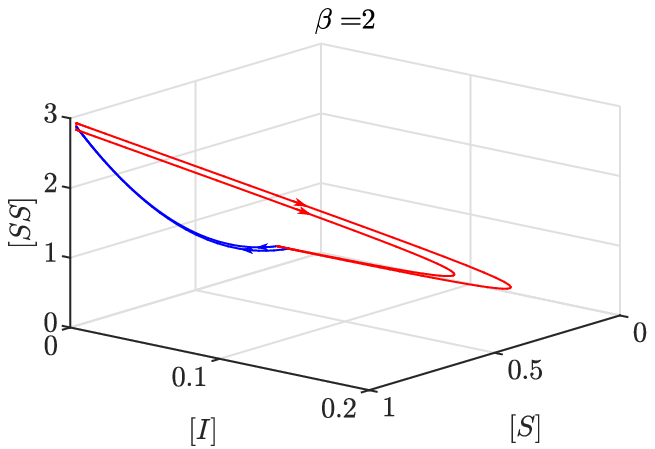}
			\caption{}
		\end{subfigure}%    
	\end{minipage}\\
	\begin{minipage}{\textwidth}
		\centering
		\begin{subfigure}{.45\textwidth}
			\centering
			\includegraphics[width=0.95\textwidth]{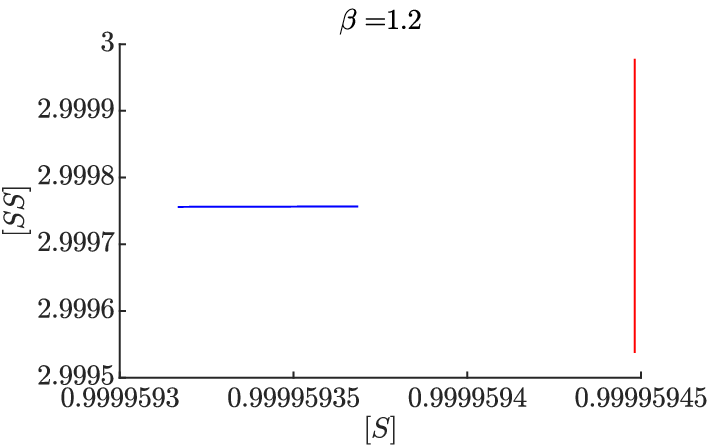}
			\caption{}
		\end{subfigure}
		\begin{subfigure}{0.45\textwidth}
			\centering
			\includegraphics[width=0.95\textwidth]{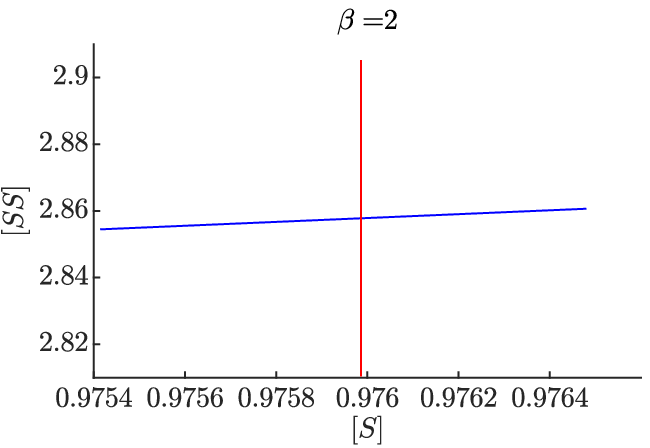}
			\caption{}
		\end{subfigure}%    
	\end{minipage}\\
	\begin{minipage}{\textwidth}
		\centering
		\begin{subfigure}{.45\textwidth}
			\centering
			\includegraphics[width=0.95\textwidth]{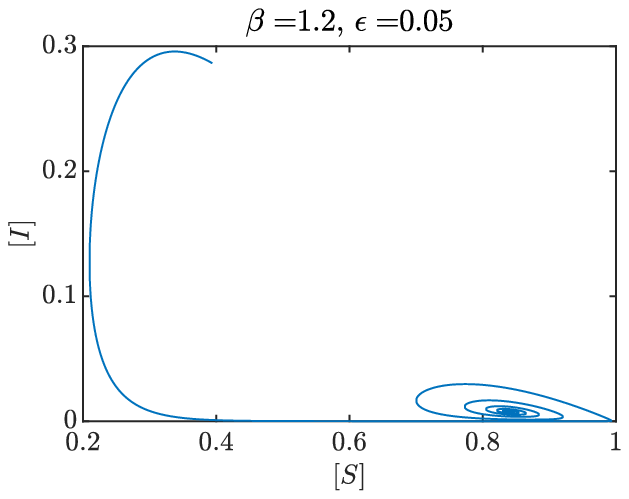}
			\caption{}
		\end{subfigure}
		\begin{subfigure}{0.45\textwidth}
			\centering
			\includegraphics[width=0.95\textwidth]{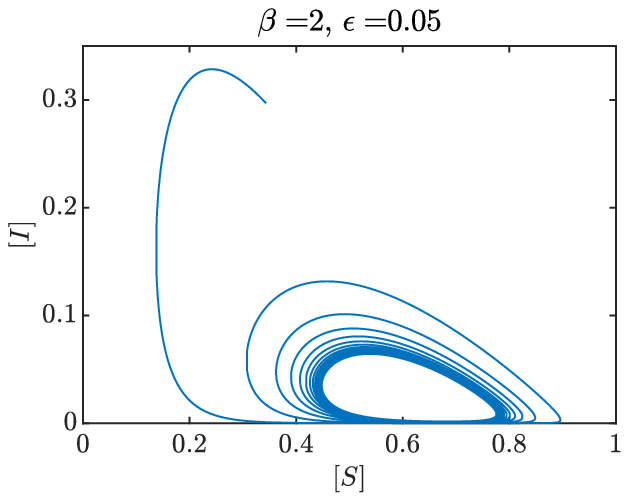}
			\caption{}
		\end{subfigure}%    
	\end{minipage}
	\caption{Numerical illustration of the effect of changing $\beta$ on the system dynamics. Figures (a) and (b): evolution under the layer system (red) of a small interval $J_1$, in the $[SS]$ direction. Its image defines the entry interval $J_2$ on the critical manifold; evolution of each point of $J_2$ under the slow flow (blue), stopped at its exit time, giving the exit interval $J_3$. Notice that the blue curves lie on the $[S],[SS]$ plane, while the red curves represent a fast excursion in the region $[I],[SI],[II]>0$. Figures (c) and (d): zoom on the relative position of $J_1$ and $J_3$ on the critical manifold. Figures (e) and (f): projections on the $[S]-[I]$ plane of numerical simulations of system (\ref{eqn:sist2}) from a random point, exhibiting convergence to the endemic equilibrium and the stable limit cycle, respectively.}
	\label{fig:numerics}
\end{figure}
Once we have obtained the candidate value for $[S]_0$, we take a small interval $J_1$ in the $[SS]$ coordinate around its corresponding value $[SS]_0$, and we map it forward in time to obtain an interval of starting points for the slow flow, $J_2$. Finally, we map $J_2$ forward in time, stopping each orbit at the corresponding exit time, obtaining a third interval  $J_3$. In \cite{jardnkojakhmetov2020geometric}, we argued that if $J_3$ intersects $J_1$ transversally, then the perturbed system, for $\epsilon>0$ small enough, exhibits stable limit cycles.\\
Figures \ref{fig:numerics} (a) and (c) depict the numerical realization of the two limit systems for a couple $(n,\beta)$ for which we do not expect limit cycles: indeed, $J_1$ and $J_3$ (respectively, the vertical red line and the blue line in (b)) do not intersect, and bifurcation analysis confirms that, for this choice of the parameters, there are no limit cycles, but global convergence to the endemic equilibrium.\\
Figures \ref{fig:numerics} (b) and (d), instead, depict the numerical realization of the two limit systems for a couple $(n,\beta)$ for which we do expect limit cycles: indeed, $J_1$ and $J_3$ intersect, and bifurcation analysis confirms that, for this choice of the parameters, there is a stable limit cycle. Since the underlying idea is the same as \cite[Sec. 3.4.1]{jardnkojakhmetov2020geometric}, we refer to that for a more in-depth explanation of the method.\\
Figures \ref{fig:numerics} (e) and (f), finally, are projections on the $[S]-[I]$ plane of orbits of system (\ref{eqn:sist2}), starting from a random initial point. As we expected, for $\epsilon$ sufficiently small, the perturbed system exhibits either convergence to equilibrium, as the combination of the two limit systems showed in Figures \ref{fig:numerics} (a) and (c), or towards a stable limit cycle, as argued from (b) and (d).\\
This numerical analysis shows that there is an interval around $\beta \sim 2$ for which periodic orbits of \eqref{eqn:sist2} exists, for $\epsilon>0$ sufficiently small.

\section{Summary and Outlook}\label{sec:conclusions}

We have analysed the behaviour of a model for epidemics on networks, given in a nonstandard singularly perturbed form, after reducing its dimension exploiting multiple conserved quantities.

Even though the model derived from the SIRS model studied in \cite{jardnkojakhmetov2020geometric}, which is characterized by global convergence to equilibrium, our bifurcation analysis and geometric numerical argument show that, for a significant open subset of the parameter space, the network generalization exhibits stable limit cycles. The main characteristic of this set is the value of $n$, the number of neighbours every individual has, which must be between $3$ and $5$ included. In practical terms, this is not a major restriction as most real-world networks have sub-networks, where individuals have around three to five very close friends. It is clear that there is further motivation to intensify the investigation of more complex compartment networks with techniques from GSPT, since dropping the homogeneous mixing hypothesis unveiled asymptotic behaviours which are impossible in the corresponding system studied without network structure. In particular, it would be interesting to check whether the periodic solutions identified in the pair-approximation model of a network can be detected also in simulations of the original network model.

Moreover, the analysis of this network generalization of the SIRS model studied in \cite{jardnkojakhmetov2020geometric} qualitatively confirmed its results, since for large values of $n$ (in the homogeneous mixing hypothesis, $n=N-1$, which is by assumption large), the system only exhibits convergence towards the endemic equilibrium.

We stress the versatility of our geometric procedure, which gives us a numerical intuition of the asymptotic behaviour of a stiff system, i.e. system (\ref{eqn:sist2}) with $0<\epsilon\ll 1$, without having to \emph{actually} integrate it, but through simple integration of the corresponding two non-stiff limit systems, which we derived through the use of GSPT. This is particularly important for the high(er) dimensionality of the system, which hinders analytical results on the perturbed system. In particular, the same strategy is likely to generalize to more complicated network-based ODE models derived from moment closure.

Furthermore, it would be interesting to rigorously investigate how the system changes as we let $n\rightarrow +\infty$. One intermediate step between having two independent perturbation parameters (\emph{i.e.}, $\epsilon \rightarrow 0$ and $n \rightarrow +\infty$) could be to couple $n$ and $\epsilon$, for example taking $n=\mathcal{O}(1/\epsilon^\alpha)$, for some $\alpha>0$. However, this goes beyond the scope of this project, and we leave this as a prompt for future research.\\

\textbf{Acknowledgments:} The work of HJK is partially funded by the Alexander-von-Humboldt Foundation. CK would like to thank the VolkswagenStiftung for support via a Lichtenberg Professorship and the add-on grant ``Corona Crises and Beyond''. MS would like to thank the University of Trento for supporting his research stay at the Technical University Munich, and Eva Loprieno for her aid with Adobe Illustrator, used in Figure \ref{fig:flux}.

\bibliographystyle{plain} \small
\bibliography{biblio}

\section{Appendix A}\label{sec:appendix}
Recall Proposition \ref{prop:exponn}. In this section, we explicitly show that the slow manifold of system (\ref{eqn:sist2}) is exponentially close to the critical manifold $\mathcal{C}_0$ (\ref{eqn:critical}).
\begin{proof}
First of all, we notice that $[I]=\mathcal{O}(\epsilon)$ implies $[SI],[II],[IR]=\mathcal{O}(\epsilon)$; recall (\ref{eqn:const2}).
Proceeding as in \cite{taghvafard2019geometric}, we propose the expansion
\begin{subequations}
\begin{align}
[I]={}&f_1([S],[SS])\epsilon+\mathcal{O}(\epsilon^2),\label{eqn:expans1}\\ [SI]={}&f_2([S],[SS])\epsilon+\mathcal{O}(\epsilon^2),\label{eqn:expans2}\\ [II]={}&f_3([S],[SS])\epsilon+\mathcal{O}(\epsilon^2),\label{eqn:expans3}
\end{align}\label{eqn:expans}%
\end{subequations}%%
\noindent where the functions $f_i$ are as smooth as necessary. We use these expansions in the respective equations for $[I]', [SI]', [II]'$ in system (\ref{eqn:sist2}), and match the corresponding powers of $\epsilon$.\\
Here we show the details with $[I]$. For ease of notation, we omit arguments of the functions $f_i$ everywhere. We need to solve
\begin{equation*}
[I]' = \bigg(\frac{\partial f_1}{\partial [S]} [S]' + \frac{\partial f_1}{\partial [SS]}[SS]' \bigg)\epsilon +  \mathcal{O}(\epsilon^2) = \beta [SI] - \gamma [I],
\end{equation*}
which becomes
\begin{align}
\begin{split}
&\bigg(\frac{\partial f_1}{\partial [S]}(-\beta f_2 \epsilon+ \epsilon  (1-[S]-f_1 \epsilon) + \mathcal{O}(\epsilon^2)) + \frac{\partial f_1}{\partial [SS]}(2\epsilon (n[S]-[SS]-f_2 \epsilon)-2\beta\frac{n-1}{n}\frac{[SS]f_2 \epsilon}{[S]})\bigg)\epsilon  \\ &+\mathcal{O}(\epsilon^2) = \beta f_2 \epsilon -\gamma f_1 \epsilon + \mathcal{O}(\epsilon^2).
\end{split}
\label{eqn:partial}
\end{align}
The LHS of (\ref{eqn:partial}) is $\mathcal{O}(\epsilon^2)$, while RHS of (\ref{eqn:partial}) is $\mathcal{O}(\epsilon)$; this means that, at first order in $\epsilon$, we have to solve RHS = 0 at first order in $\epsilon$, i.e. ignoring the contribution which is $\mathcal{O}(\epsilon^2)$.\\
From the equation for $[I]'$ we see that
\begin{equation*}%\label{eqn:f1}
0=\beta f_2 \epsilon - \gamma f_1 \epsilon  \implies f_1 = \frac{\beta}{\gamma} f_2.
\end{equation*}
The same arguments can be applied for $[SI]$ and $[II]$. From the equation for $[SI]'$ we have
\begin{equation*}%\label{eqn:f2}
0=-(\gamma+\beta) f_2 \epsilon + \epsilon^2 (nf_1-f_2-f_3) +\frac{n-1}{n} \beta f_2 \epsilon \bigg(\frac{[SS]}{[S]} - \frac{f_2 \epsilon}{[S]}\bigg)  \implies  f_2= 0 \implies  f_1= 0.
\end{equation*}
From the equation for $[II]'$
\begin{equation*}%\label{eqn:f3}
0=2\beta \epsilon f_2-2\gamma \epsilon f_3+2\epsilon^2\beta\frac{n-1}{n}\frac{f_2^2}{[S]} \implies f_3= 0.
\end{equation*}
This shows that, at first order in $\epsilon$, $f_1 = f_2 = f_3 = 0$. So, in the first order in $\epsilon$, the slow manifold is still $[I]=[SI]=[II]=0$.\\
We now prove by induction that, for any $k \in \mathbb{N}$, the slow manifold is exactly 0 in the expansion up to $\epsilon^k$. By assumption, we can write $[I]=g_1([S],[SS],[SR])\epsilon^k+\mathcal{O}(\epsilon^{k+1})$, $[SI]=g_2([S],[SS],[SR])\epsilon^k+\mathcal{O}(\epsilon^{k+1})$, $[II]=g_3([S],[SS],[SR])\epsilon^{k}+\mathcal{O}(\epsilon^{k+1})$.\\
Proceeding as above, all the LHSs will be $\mathcal{O}(\epsilon^{k+1})$, while the RHSs will be $\mathcal{O}(\epsilon^k)$, meaning we still have to solve RHS = 0.\\
From the equation for $[I]'$ (omitting, once again, all the arguments of $g_i$ everywhere, for ease of notation):
\begin{equation*}%\label{eqn:uno}
0=\beta g_2 \epsilon^k - \gamma g_1 \epsilon^k  \implies g_1 = \frac{\beta}{\gamma} g_2.
\end{equation*}
From the equation for $[SI]'$:
\begin{equation*}%\label{eqn:due}
\begin{split}
&0=-(\gamma+\beta) g_2 \epsilon^k + \epsilon^{k+1} (ng_1-g_2-g_3) +\frac{n-1}{n} \beta g_2 \epsilon^k \bigg(\frac{[SS]}{[S]} - \frac{g_2 \epsilon^k}{[S]}\bigg)  \\& \implies g_2= 0 \implies g_1= 0.
\end{split}
\end{equation*}
From the equation for $[II]'$:
\begin{equation*}
0=2\beta g_2 \epsilon^{k} - 2\gamma g_3 \epsilon^{k} +\beta\frac{n-1}{n} \epsilon^{2k}\frac{g_2^2 }{[S]}  \implies g_3 = 0.
\end{equation*}
This shows that the slow manifold is exponentially close in $\epsilon$ to the critical manifold $[I]=[SI]=[II]=0$.
\end{proof}
\end{document}